\newtheorem{theorem}{Theorem}
\newtheorem{corollary}[theorem]{Corollary}
\newtheorem{definition}[theorem]{Definition}
\newtheorem{example}[theorem]{Example}
\newtheorem{remark}[theorem]{Remark}
\begin{document}

\title{Fractional Calculus of Variations in Terms
of a Generalized Fractional Integral with Applications
to Physics\thanks{Part of the first author's Ph.D.,
which is carried out at the University of Aveiro
under the Doctoral Programme
\emph{Mathematics and Applications}
of Universities of Aveiro and Minho.
Submitted 01/Jan/2012; revised 25/Feb/2012; accepted 27/Feb/2012;
for publication in \emph{Abstract and Applied Analysis} 
(http://www.hindawi.com/journals/aaa/).}}

\author{Tatiana Odzijewicz$^{1}$\\
\texttt{tatianao@ua.pt}
\and
Agnieszka B. Malinowska$^{2}$\\
\texttt{a.malinowska@pb.edu.pl}
\and
Delfim F. M. Torres$^{1}$\\
\texttt{delfim@ua.pt}}

\date{$^1$Center for Research and Development in Mathematics and Applications\\
Department of Mathematics, University of Aveiro, 3810-193 Aveiro, Portugal\\[0.3cm]
$^2$Faculty of Computer Science, Bia{\l}ystok University of Technology\\
15-351 Bia\l ystok, Poland}

\maketitle


\begin{abstract}
We study fractional variational problems in terms
of a generalized fractional integral with Lagrangians
depending on classical derivatives, generalized fractional integrals
and derivatives. We obtain necessary
optimality conditions for the basic and isoperimetric problems, as well as
natural boundary conditions for free boundary value problems.
The fractional action--like variational approach (FALVA)
is extended and some applications to Physics discussed.

\bigskip

\noindent \textbf{Keywords:}
generalized fractional operators;
fractional variational analysis;
isoperimetric problems; Euler--Lagrange equations;
natural boundary conditions;
fractional action-like variational approach (FALVA);
Caldirola--Kanai oscillatory dynamical systems.

\bigskip

\noindent \textbf{2010 Mathematics Subject Classification:} 49K05; 49K21; 49S05; 26A33; 34A08.

\end{abstract}


\section{Introduction}

The calculus of variations is a beautiful and useful field of mathematics
that deals with problems of determining extrema (maxima or minima)
of functionals \cite{BorisBookI,BorisBookII,book:Brunt}.
It starts with the simplest problem of finding a function extremizing
(minimizing or maximizing) an integral
\begin{equation*}
\mathcal{J}(y)=\int\limits_a^b F(t,y(t),y'(t))dt
\end{equation*}
subject to boundary conditions $y(a)=y_a$ and $y(b)=y_b$.
In the literature many generalizations of this problem were proposed, including
problems with multiple integrals, functionals containing higher-order derivatives,
and functionals depending on several functions \cite{MyID:130,MyID:203,MalinowskaTorres}.
Of our interest is an extension proposed by Riewe in 1996-1997,
where fractional derivatives (real or complex order)
are introduced in the Lagrangian \cite{CD:Riewe:1996,CD:Riewe:1997}.

During the last decade, fractional problems have increasingly attracted the attention of many
researchers. As mentioned in \cite{isi}, Science Watch of Thomson Reuters identified
the subject as an \emph{Emerging Research Front} area. Fractional derivatives
are nonlocal operators and are historically applied
in the study of nonlocal or time dependent processes \cite{book:Podlubny}.
The first and well established application of fractional calculus in Physics
was in the framework of anomalous diffusion, which is related to features observed
in many physical systems. Here we can mention the report \cite{MK} demonstrating
that fractional equations works as a complementary tool in the description of anomalous
transport processes. Within the fractional approach it is possible to include
external fields in a straightforward manner. As a consequence, in a short period of time the list
of applications expanded. Applications include chaotic dynamics \cite{Zaslavsky},
material sciences \cite{Mainardi}, mechanics of fractal and complex media \cite{Carpinteri,Li},
quantum mechanics \cite{Hilfer,Laskin}, physical kinetics \cite{Edelman},
long-range dissipation \cite{Tarasov3}, long-range interaction
\cite{Tarasov2,Tarasov1}, just to mention a few.
One of the most remarkable applications of fractional calculus appears, however,
in the fractional variational calculus, in the context of
classical mechanics. Riewe \cite{CD:Riewe:1996,CD:Riewe:1997}
shows that a Lagrangian involving fractional
time derivatives leads to an equation of motion with nonconservative forces such
as friction. It is a remarkable result since frictional and nonconservative forces
are beyond the usual macroscopic variational treatment and, consequently,
beyond the most advanced methods of classical mechanics \cite{Lanczos}. Riewe generalizes
the usual variational calculus, by considering Lagrangians that dependent on fractional derivatives,
in order to deal with nonconservative forces. Recently, several
different approaches have been developed to generalize the least action principle and the
Euler--Lagrange equations to include fractional derivatives.
Results include problems depending on Caputo fractional derivatives,
Riemann--Liouville fractional derivatives and others
\cite{Almeida:AML,MyID:182,MyID:152,MyID:179,Cresson,jmp,gastao,mal,comBasia:Frac1,comDorota,MyID:181,MyID:207,Sha}.

A more general unifying perspective to the subject is, however, possible,
by considering fractional operators depending on general kernels
\cite{OmPrakashAgrawal,book:Kiryakova,MyID:226}. In this work we follow such an approach,
developing a generalized fractional calculus of variations. We consider
very general problems, where the classical integrals are substituted by generalized fractional
integrals, and the Lagrangians depend not only on classical derivatives
but also on generalized fractional operators. Problems of the type considered here,
for particular kernels, are important in Physics \cite{Nabulsi}.
Here we obtain general necessary optimality conditions, for several types
of variational problems, which are valid for rather arbitrary operators and kernels.
By choosing particular operators and kernels, one obtains
the recent results available in the literature of Mathematical Physics
\cite{Caldirola,Nabulsi2,Nabulsi3,Nabulsi4,Nabulsi,Herrera}.

The paper is organized as follows. In Section~\ref{sec:prelim} we introduce
the generalized fractional operators and prove some of its basic properties.
Section~\ref{sec:fip} is dedicated to prove integration by parts formulas
for the generalized fractional operators. Such formulas are then used in
later sections to prove necessary optimality conditions
(Theorems~\ref{theorem:ELCaputo} and \ref{theorem:EL2}).
In Sections~\ref{sec:fp}, \ref{sec:fpfb} and \ref{sec:fp:iso}
we study three important classes of generalized variational problems:
we obtain fractional Euler--Lagrange conditions
for the fundamental (Section~\ref{sec:fp}) and generalized isoperimetric problems
(Section~\ref{sec:fp:iso}), as well as fractional natural boundary conditions
for generalized free-boundary value problems (Section~\ref{sec:fpfb}).
Finally, two illustrative examples are discussed in detail in Section~\ref{sec:ex},
while applications to Physics are given in Section~\ref{sec:appl:phys}:
in Section~\ref{sub:sec:harosc} we obtain the damped harmonic oscillator
in quantum mechanics, in Section~\ref{sub:sec:FALVA} we show how results
from FALVA Physics can be obtained. We end with Section~\ref{sec:conc}
of conclusion, pointing out an important direction of future research.


\section{Preliminaries}
\label{sec:prelim}

In this section we present definitions and properties of generalized fractional operators.
As particular cases, by choosing appropriate kernels, these operators are reduced
to standard fractional integrals and fractional derivatives.
Other nonstandard kernels can also be considered as particular cases.
For more on the subject of generalized fractional calculus and applications,
we refer the reader to the book \cite{book:Kiryakova}.
Throughout the text, $\alpha$ denotes a
real number between zero and one.
Following \cite{MyID:209}, we use round brackets for the arguments of functions,
and square brackets for the arguments of operators.
By definition, an operator receives and returns a function.

\begin{definition}[Generalized fractional integral]
The operator $K_P^\alpha$ is given by
\begin{equation*}
K_P^{\alpha}\left[f\right](x)
= K_P^{\alpha}\left[t \mapsto f(t)\right](x)
=p\int\limits_{a}^{x}k_{\alpha}(x,t)f(t)dt
+q\int\limits_{x}^{b}k_{\alpha}(t,x)f(t)dt,
\end{equation*}
where $P=\langle a,x,b,p,q\rangle$ is the \emph{parameter set} ($p$-set for brevity),
$x\in[a,b]$, $p,q$ are real numbers, and $k_{\alpha}(x,t)$
is a kernel which may depend on $\alpha$.
The operator $K_P^\alpha$ is referred as the \emph{operator $K$} ($K$-op for simplicity)
of order $\alpha$ and $p$-set $P$, while $K_P^{\alpha}[f]$ is called the
\emph{operation $K$} (or $K$-opn) of $f$ of order $\alpha$ and p-set $P$.
\end{definition}

Note that if we define
\[
G(x,t):= \left\{ \begin{array}{ll}
p k_\alpha(x,t) & \mbox{if $t < x$},\\
q k_\alpha(t,x) & \mbox{if $t \geq x$},
\end{array} \right.
\]
then the operator $K_P^\alpha$ can be written in the form
\begin{equation*}
K_P^{\alpha}\left[f\right](x)
= K_P^{\alpha}\left[ t \mapsto f(t)\right](x)
=\int_a^b G(x,t) f(t) dt.
\end{equation*}
This is a particular case of one of the oldest and most respectable class
of operators, so called Fredholm operators \cite{book:Helemskii,book:Polyanin}.

\begin{theorem}[\textrm{cf.} Example~6 of \cite{book:Helemskii}]
Let $\alpha\in(0,1)$ and $P=\langle a,x,b,p,q\rangle$. If $k_\alpha$ is a square integrable function
on the square $\Delta=[a,b]\times[a,b]$, then
$K_P^{\alpha}:L_2\left([a,b]\right)\rightarrow L_2\left([a,b]\right)$
is well defined, linear, and bounded operator.
\end{theorem}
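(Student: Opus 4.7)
The plan is to recognize that $K_P^\alpha$ is a Fredholm integral operator with kernel $G(x,t)$ (as exhibited in the paragraph just before the theorem), and then to apply the classical Hilbert--Schmidt estimate: any such operator with $G\in L_2(\Delta)$ is bounded on $L_2([a,b])$ with norm at most $\|G\|_{L_2(\Delta)}$.

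First I would verify that $G\in L_2(\Delta)$. Split $\Delta$ along the diagonal into the two triangles $\Delta_{<}=\{(x,t):t<x\}$ and $\Delta_{\geq}=\{(x,t):t\geq x\}$. On $\Delta_{<}$ we have $G(x,t)=p\,k_\alpha(x,t)$, and on $\Delta_{\geq}$ we have $G(x,t)=q\,k_\alpha(t,x)$. Hence
\begin{equation*}
\|G\|_{L_2(\Delta)}^2
\;\leq\; p^2\!\!\int_{\Delta_{<}}\!|k_\alpha(x,t)|^2\,dx\,dt
+ q^2\!\!\int_{\Delta_{\geq}}\!|k_\alpha(t,x)|^2\,dx\,dt
\;\leq\; (p^2+q^2)\,\|k_\alpha\|_{L_2(\Delta)}^2,
\end{equation*}
where the second triangle is handled by the change of variables $(x,t)\mapsto(t,x)$, which preserves $\Delta$ and Lebesgue measure. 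Since $k_\alpha\in L_2(\Delta)$ by hypothesis, $G\in L_2(\Delta)$.

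Next, for $f\in L_2([a,b])$ I would apply the Cauchy--Schwarz inequality to the inner integral in
\begin{equation*}
K_P^{\alpha}[f](x)=\int_a^b G(x,t)\,f(t)\,dt,
\end{equation*}
obtaining $|K_P^{\alpha}[f](x)|^2\leq \left(\int_a^b |G(x,t)|^2\,dt\right)\|f\|_{L_2}^2$ for a.e.\ $x$ (by Fubini, the map $x\mapsto \int_a^b |G(x,t)|^2\,dt$ is finite a.e.\ and itself integrable). Integrating over $x\in[a,b]$ and invoking Fubini once more gives the Hilbert--Schmidt bound
\begin{equation*}
\|K_P^\alpha[f]\|_{L_2}^2 \;\leq\; \|G\|_{L_2(\Delta)}^2\,\|f\|_{L_2}^2
\;\leq\; (p^2+q^2)\,\|k_\alpha\|_{L_2(\Delta)}^2\,\|f\|_{L_2}^2.
\end{equation*}
This simultaneously proves that the defining integral of $K_P^\alpha[f]$ converges for a.e.\ $x$, that the resulting function lies in $L_2([a,b])$ (well-definedness), and that $K_P^\alpha$ is bounded. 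Linearity is immediate from linearity of the Lebesgue integral in $f$.

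There is no real obstacle here; the only point requiring care is the verification that $G\in L_2(\Delta)$, where the piecewise definition of $G$ and the transposition of the arguments of $k_\alpha$ on one of the triangles must be handled by a measure-preserving change of variables. The rest is a direct invocation of Cauchy--Schwarz and Fubini, which is why the authors cite the statement as an instance of the standard result on Fredholm operators from \cite{book:Helemskii}.
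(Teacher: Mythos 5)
Your proof is correct and follows exactly the route the paper intends: it exhibits $K_P^\alpha$ as a Fredholm operator with the kernel $G$ defined just before the theorem, checks $G\in L_2(\Delta)$ by splitting $\Delta$ along the diagonal, and invokes the standard Hilbert--Schmidt bound via Cauchy--Schwarz and Fubini. The paper itself supplies no proof beyond the citation to Example~6 of \cite{book:Helemskii}, so your argument is simply a correct, complete expansion of that cited classical result.
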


\begin{theorem}
\label{theorem:L1}
Let $k_\alpha$ be a difference kernel, \textrm{i.e.},
let $k_\alpha\in L_1\left([a,b]\right)$ with $k_\alpha(x,t)=k_\alpha(x-t)$.
Then, $K_P^{\alpha}:L_1\left([a,b]\right)\rightarrow L_1\left([a,b]\right)$
is a well defined bounded and linear operator.
\end{theorem}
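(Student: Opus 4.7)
The plan is to recognize that $K_P^\alpha[f]$ is essentially a pair of truncated convolutions with the difference kernel $k_\alpha$, so that the proof becomes a version of Young's inequality $\|k\ast f\|_{L_1}\le \|k\|_{L_1}\|f\|_{L_1}$. Linearity is immediate from the linearity of the Lebesgue integral with respect to $f$, so the real work is to establish the $L_1$-bound; well-definedness (measurability and finiteness a.e.) will then follow from Tonelli applied to the non-negative integrand $|G(x,t)f(t)|$.

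First, I would split
\[
K_P^{\alpha}[f](x)=p\int_a^x k_\alpha(x-t)f(t)\,dt+q\int_x^b k_\alpha(t-x)f(t)\,dt
\]
and bound each term separately. For the first, I would take absolute values, write the region $\{(x,t):a\le t\le x\le b\}$ explicitly, and apply Tonelli's theorem to swap the order of integration:
\[
\int_a^b\!\!\int_a^x |k_\alpha(x-t)||f(t)|\,dt\,dx
=\int_a^b |f(t)|\!\!\int_t^b |k_\alpha(x-t)|\,dx\,dt.
\]
The inner integral is then reduced by the substitution $u=x-t$ to $\int_0^{b-t}|k_\alpha(u)|\,du$, which is dominated by $\|k_\alpha\|_{L_1([a,b])}$. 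This yields the estimate $|p|\,\|k_\alpha\|_{L_1}\|f\|_{L_1}$ for the first term. A symmetric argument on the region $\{(x,t):a\le x\le t\le b\}$, with the substitution $u=t-x$, handles the second term and gives $|q|\,\|k_\alpha\|_{L_1}\|f\|_{L_1}$. Adding the two produces
\[
\|K_P^\alpha[f]\|_{L_1}\le(|p|+|q|)\,\|k_\alpha\|_{L_1}\,\|f\|_{L_1},
\]
which simultaneously shows that $K_P^\alpha[f]\in L_1([a,b])$ (so $K_P^\alpha$ is well defined) and that the operator is bounded with norm at most $(|p|+|q|)\|k_\alpha\|_{L_1}$.

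The main technical point is the application of Tonelli: one must verify that $(x,t)\mapsto k_\alpha(x-t)f(t)\mathbf{1}_{t\le x}$ is measurable on $[a,b]^2$, which follows from the fact that $(x,t)\mapsto x-t$ is continuous and $k_\alpha,f$ are measurable, so the composition and product are measurable. I do not anticipate any genuine obstacle beyond this and the bookkeeping of making sure the translated kernel $u\mapsto k_\alpha(u)$ is integrated over a subinterval of the domain on which $k_\alpha\in L_1$; if needed, one can replace the bound $\|k_\alpha\|_{L_1([a,b])}$ by $\|k_\alpha\|_{L_1([0,b-a])}$, which is the precise quantity that appears after the substitution. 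Linearity is clear, so combining these steps completes the proof.
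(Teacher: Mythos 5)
Your proof is correct and follows essentially the same route as the paper's: take absolute values of the two truncated-convolution pieces, apply Tonelli/Fubini to exchange the order of integration, and bound the inner integral of the translated kernel by $\left\|k_\alpha\right\|_{L_1}$, yielding linearity, well-definedness and boundedness in one stroke. The only noteworthy difference is that your constant $(|p|+|q|)\left\|k_\alpha\right\|_{L_1}$ is the correct one, whereas the paper's displayed bound $\bigl||p|-|q|\bigr|\cdot\left\|k_\alpha\right\|\cdot\left\|f\right\|$ is evidently a typographical slip for the same quantity.
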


\begin{proof}
Obviously, the operator is linear. Let $\alpha\in(0,1)$,
$P=\langle a,t,b,p,q\rangle$, and $f\in L_1\left([a,b]\right)$. Define
\[
F(\tau,t):=
\left\{
\begin{array}{ll}
\left| p k_\alpha(t-\tau)\right|\cdot \left|f(\tau)\right| & \mbox{if $\tau \leq t$}\\
\left| q k_\alpha(\tau-t)\right|\cdot \left|f(\tau)\right| & \mbox{if $\tau > t$}
\end{array} \right.
\]
for all $(\tau,t)\in\Delta=[a,b]\times[a,b]$.
Since $F$ is measurable on the square $\Delta$, we have
\begin{equation*}
\begin{split}
\int_a^b \left(\int_a^b F(\tau,t)dt\right)d\tau
&=\int_a^b\left[\left|f(\tau)\right|\left(\int_{\tau}^b \left|p k_\alpha(t-\tau)\right|dt
+\int_{a}^{\tau}\left| q k_\alpha(\tau-t)\right|dt\right)\right]d\tau\\
&\leq\int_a^b\left|f(\tau)\right| \bigl| |p| - |q| \bigr| \left\|k_\alpha\right\|d\tau\\
&=\bigl| |p| - |q| \bigr| \cdot \left\|k_\alpha\right\|\cdot\left\|f\right\|.
\end{split}
\end{equation*}
It follows from Fubini's theorem that $F$
is integrable on the square $\Delta$. Moreover,
\begin{equation*}
\begin{split}
\left\|K_P^\alpha[f]\right\|
&=\int_a^b\left|p\int_{a}^{t}k_{\alpha}(t-\tau)f(\tau)d\tau
+q\int_{t}^{b}k_{\alpha}(\tau-t)f(\tau)d\tau\right|dt\\
&\leq\int_a^b\left(|p|\int_{a}^{t}\left|
k_{\alpha}(t-\tau)\right|\cdot\left|f(\tau)\right|d\tau
+|q|\int_{t}^{b}\left|k_{\alpha}(\tau-t)\right|
\cdot\left|f(\tau)\right|d\tau\right)dt\\
&=\int_a^b\left(\int_a^b F(\tau,t)d\tau\right)dt\\
&\leq \bigl| |p| - |q| \bigl| \cdot \left\|k_\alpha\right\|\cdot\left\|f\right\|.
\end{split}
\end{equation*}
Hence, $K_P^{\alpha}:L_1\left([a,b]\right)\rightarrow
L_1\left([a,b]\right)$ and $\left\|K_P^\alpha
\right\|\leq \bigl| |p| - |q| \bigr| \cdot \left\|k_\alpha\right\|$.
\end{proof}

\begin{remark}
The $K$-op reduces to the left and the right Riemann--Liouville
fractional integrals from a suitably chosen kernel
$k_{\alpha}(x,t)$ and $p$-set $P$.
Let $k_{\alpha}(x,t) = k_{\alpha}(x-t)=\frac{1}{\Gamma(\alpha)}(x-t)^{\alpha-1}$:
\begin{itemize}
\item if $P=\langle a,x,b,1,0\rangle$, then
\begin{equation*}
K_{P}^{\alpha}\left[f\right](x)
=\frac{1}{\Gamma(\alpha)}\int\limits_a^x(x-t)^{\alpha-1}f(t)dt
=: {_{a}}\textsl{I}^{\alpha}_{x} \left[f\right](x)
\end{equation*}
is the standard left Riemann--Liouville fractional integral of $f$ of order $\alpha$;
\item if $P=\langle a,x,b,0,1\rangle$, then
\begin{equation*}
K_{P}^{\alpha}\left[f\right](x)=\frac{1}{\Gamma(\alpha)}\int\limits_x^b(t-x)^{\alpha-1}f(t)dt
=: {_{x}}\textsl{I}^{\alpha}_{b} \left[f\right](x)
\end{equation*}
is the standard right Riemann--Liouville fractional integral of $f$ of order $\alpha$.
\end{itemize}
\end{remark}

\begin{corollary}
\label{corollary:Ibounded}
Operators ${_{a}}\textsl{I}^{\alpha}_{x}, {_{x}}\textsl{I}^{\alpha}_{b}:
L_1\left([a,b]\right)\rightarrow L_1\left([a,b]\right)$
are well defined, linear and bounded.
\end{corollary}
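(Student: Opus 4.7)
The plan is to deduce the corollary as a direct specialization of Theorem~\ref{theorem:L1}, since the Riemann--Liouville kernel is a difference kernel and the two Riemann--Liouville integrals correspond to the two $p$-sets highlighted in the preceding remark.

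First I would record the kernel $k_\alpha(s) = \frac{1}{\Gamma(\alpha)} s^{\alpha-1}$ appearing in the remark immediately above the corollary, and verify that $k_\alpha$ is an $L_1$ difference kernel on the relevant interval. Since $\alpha \in (0,1)$, a direct computation gives $\int_0^{b-a} \frac{1}{\Gamma(\alpha)} s^{\alpha-1} \, ds = \frac{(b-a)^\alpha}{\Gamma(\alpha+1)} < \infty$, so the hypothesis $k_\alpha \in L_1$ of Theorem~\ref{theorem:L1} is satisfied.

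Next I would apply Theorem~\ref{theorem:L1} twice. Choosing $P=\langle a,x,b,1,0\rangle$, the $K$-op reduces, by the remark, to ${_a}\textsl{I}^\alpha_x$, and Theorem~\ref{theorem:L1} then gives both well-definedness and linearity, together with the bound $\|{_a}\textsl{I}^\alpha_x\| \leq \bigl||1|-|0|\bigr|\cdot\|k_\alpha\| = \frac{(b-a)^\alpha}{\Gamma(\alpha+1)}$. Choosing instead $P=\langle a,x,b,0,1\rangle$ yields ${_x}\textsl{I}^\alpha_b$ and the same bound by symmetric reasoning.

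There is no real obstacle here: the only substantive check is the integrability of $s^{\alpha-1}$ near $0$, which holds precisely because $\alpha > 0$. I would keep the proof to two or three lines, presenting it essentially as an observation that the corollary is an instance of Theorem~\ref{theorem:L1} for the two parameter sets singled out in the remark.
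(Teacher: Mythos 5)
Your proposal is correct and is exactly the argument the paper intends: the corollary is a direct specialization of Theorem~\ref{theorem:L1} to the difference kernel $k_\alpha(s)=\frac{1}{\Gamma(\alpha)}s^{\alpha-1}$ (integrable on $[0,b-a]$ since $\alpha>0$) with the two $p$-sets $\langle a,x,b,1,0\rangle$ and $\langle a,x,b,0,1\rangle$ identified in the preceding remark. Your explicit verification that $\|k_\alpha\|_{L_1}=\frac{(b-a)^\alpha}{\Gamma(\alpha+1)}<\infty$ is the one substantive check, and you have it right.
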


The generalized fractional derivatives $A_P^\alpha$
and $B_P^\alpha$ are defined in terms of the
generalized fractional integral $K$-op.

\begin{definition}[Generalized Riemann--Liouville fractional derivative]
\label{def:GRL}
Let $P$ be a given parameter set and $0<\alpha < 1$.
The operator $A_P^\alpha$ is defined by
$A_P^\alpha = D \circ K_P^{1-\alpha}$,
where $D$ denotes the standard derivative operator,
and is referred as the \emph{operator $A$} ($A$-op)
of order $\alpha$ and $p$-set $P$,
while $A_P^\alpha[f]$, for a function $f$ such that
$K_P^{1-\alpha}[f]\in AC\left([a,b]\right)$,
is called the \emph{operation $A$} ($A$-opn)
of $f$ of order $\alpha$ and $p$-set $P$.
\end{definition}

\begin{definition}[Generalized Caputo fractional derivative]
\label{def:GC}
Let $P$ be a given parameter set and $\alpha \in (0,1)$.
The operator $B_P^\alpha$ is defined by
$B_P^\alpha =K_P^{1-\alpha} \circ D$,
where $D$ denotes the standard derivative operator,
and is referred as the \emph{operator $B$} ($B$-op)
of order $\alpha$ and $p$-set $P$,
while $B_P^\alpha[f]$, for a function $f\in AC\left([a,b]\right)$,
is called the \emph{operation $B$} ($B$-opn)
of $f$ of order $\alpha$ and $p$-set $P$.
\end{definition}

\begin{remark}
The standard Riemann--Liouville and Caputo fractional derivatives
are easily obtained from the generalized operators
$A_P^\alpha $ and $B_P^\alpha$, respectively.
Let $k_{1-\alpha}(x,t)=k_{1-\alpha}(x-t)
=\frac{(x-t)^{-\alpha}}{\Gamma(1-\alpha)}$:
\begin{itemize}
\item if $P=\langle a,x,b,1,0\rangle$, then
\begin{equation*}
A_{P}^\alpha\left[f\right](x)=\frac{1}{\Gamma(1-\alpha)}
D\left[\xi \mapsto \int\limits_{a}^{\xi} (\xi-t)^{-\alpha}f(t)dt\right](x)
=: {_{a}}\textsl{D}^{\alpha}_{x}\left[f\right](x)
\end{equation*}
is the standard left Riemann--Liouville fractional derivative
of $f$ of order $\alpha$, while
\begin{equation*}
B_{P}^\alpha \left[f\right](x)=\frac{1}{\Gamma(1-\alpha)}
\int\limits_a^x(x-t)^{-\alpha} D[f](t)dt
=: {^{C}_{a}}\textsl{D}^{\alpha}_{x} \left[f\right](x)
\end{equation*}
is the standard left Caputo fractional derivative of $f$ of order $\alpha$;

\item if $P=\langle a,x,b,0,1\rangle$, then
\begin{equation*}
-A_{P}^\alpha \left[f\right](x)=\frac{-1}{\Gamma(1-\alpha)}
D\left[\xi \mapsto \int\limits_{\xi}^b(t-\xi)^{-\alpha}f(t)dt\right](x)
=: {_{x}}\textsl{D}^{\alpha}_{b} \left[f\right](x)
\end{equation*}
is the standard right Riemann--Liouville
fractional derivative of $f$ of order $\alpha$, while
\begin{equation*}
-B_{P}^\alpha \left[f\right](x) =\frac{-1}{\Gamma(1-\alpha)}
\int\limits_x^b(t-x)^{-\alpha} D[f](t)dt\\
=: {^{C}_{x}}\textsl{D}^{\alpha}_{b} \left[f\right](x)
\end{equation*}
is the standard right Caputo fractional derivative
of $f$ of order $\alpha$.
\end{itemize}
\end{remark}


\section{On generalized fractional integration by parts}
\label{sec:fip}

We now prove integration by parts formulas
for generalized fractional operators.

\begin{theorem}[Fractional integration by parts for the $K$-op]
\label{thm:gfip:Kop}
Let $\alpha \in (0,1)$, $P=\langle a,t,b,p,q\rangle$,
$k_{\alpha}$ be a square-integrable function
on $\Delta=[a,b]\times[a,b]$, and $f,g\in L_2\left([a,b]\right)$.
The generalized fractional integral $K_P^{\alpha}$ satisfies
the integration by parts formula
\begin{equation}
\label{eq:fracIP:K}
\int\limits_a^b g(x)K_P^{\alpha}\left[f\right](x)dx
=\int\limits_a^b f(x)K_{P^*}^{\alpha}\left[g\right](x)dx ,
\end{equation}
where $P^{*}=<a,t,b,q,p>$.
\end{theorem}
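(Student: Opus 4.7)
The plan is a direct Fubini-type argument, essentially unfolding both sides and matching them term by term. Given $P=\langle a,t,b,p,q\rangle$, I would begin by expanding the left-hand side of \eqref{eq:fracIP:K} using the definition of $K_P^{\alpha}$, which splits the integral into two pieces:
\begin{equation*}
\int_a^b g(x)K_P^{\alpha}[f](x)\,dx
= p\int_a^b g(x)\int_a^x k_\alpha(x,t)f(t)\,dt\,dx
+ q\int_a^b g(x)\int_x^b k_\alpha(t,x)f(t)\,dt\,dx.
\end{equation*}

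The core step is to swap the order of integration in each double integral. For the first piece the region of integration is the triangle $\{(x,t):a\le t\le x\le b\}$, so after interchange it becomes $p\int_a^b f(t)\int_t^b k_\alpha(x,t)g(x)\,dx\,dt$. For the second piece the region is $\{(x,t):a\le x\le t\le b\}$, which after interchange yields $q\int_a^b f(t)\int_a^t k_\alpha(t,x)g(x)\,dx\,dt$. Combining and relabelling the dummy variables ($t\leftrightarrow x$ outside, inner variable called $t$) produces exactly
\begin{equation*}
\int_a^b f(x)\left[q\int_a^x k_\alpha(x,t)g(t)\,dt+p\int_x^b k_\alpha(t,x)g(t)\,dt\right]dx,
\end{equation*}
and the bracket is $K_{P^{*}}^{\alpha}[g](x)$ with $P^{*}=\langle a,x,b,q,p\rangle$, giving the claim.

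The one place requiring care, and the only potential obstacle, is the justification of Fubini's theorem. With $k_\alpha\in L_2(\Delta)$ and $f,g\in L_2([a,b])$, Theorem~1 of the excerpt (the Helemskii boundedness statement) ensures $K_P^{\alpha}[f]\in L_2([a,b])$, so that $g\cdot K_P^{\alpha}[f]\in L_1([a,b])$ by Cauchy--Schwarz; the same estimate applied to $|k_\alpha|,|f|,|g|$ shows that the absolute value of the integrand on $\Delta$ is integrable, which is exactly Tonelli's hypothesis. Thus Fubini applies to each of the two triangular pieces separately, and the swap of integrals is legitimate. Linearity of the integral then lets me recombine the two pieces into the single operator $K_{P^{*}}^{\alpha}$, completing the argument. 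No delicate boundary behaviour or smoothness of $k_\alpha$ is needed, because the formula is purely measure-theoretic.
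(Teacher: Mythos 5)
Your proposal is correct and follows essentially the same route as the paper: expand $K_P^{\alpha}[f]$, justify an interchange of the order of integration on each triangular piece via a Tonelli/Fubini argument backed by an $L_2$ Cauchy--Schwarz (H\"older) estimate on $|k_\alpha|$, $|f|$, $|g|$, and then relabel variables to recognize $K_{P^*}^{\alpha}[g]$. The only cosmetic difference is that the paper writes out the H\"older estimate explicitly rather than invoking the boundedness theorem applied to absolute values, and your phrase ``Tonelli's hypothesis'' should really read ``the integrability hypothesis of Fubini's theorem, verified via Tonelli,'' but the logic is sound.
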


\begin{proof}
Define
\[
F(\tau,t):=
\left\{
\begin{array}{ll}
\left|pk_\alpha(t,\tau)\right|
\cdot\left|g(t)\right|\cdot \left|f(\tau)\right|
& \mbox{if $\tau \leq t$}\\
\left|qk_\alpha(\tau,t)\right|
\cdot \left|g(t)\right|\cdot\left|f(\tau)\right|
& \mbox{if $\tau > t$}
\end{array}\right.
\]
for all $(\tau,t)\in\Delta$. Applying Holder's inequality, we obtain
\begin{equation*}
\begin{split}
\int_a^b \left(\int_a^b F(\tau,t)dt\right)d\tau
&=\int_a^b\left[\left|f(\tau)\right|\left(\int_{\tau}^b\left|pk_\alpha(t,\tau)\right|
\cdot\left|g(t)\right|dt+\int_{a}^{\tau}\left|qk_\alpha(\tau,t)\right|
\cdot\left|g(t)\right|dt\right)\right]d\tau\\
&\leq \int_a^b\left[\left|f(\tau)\right|\left(
\int_{a}^b\left|pk_\alpha(t,\tau)\right|\cdot
\left|g(t)\right|dt+\int_{a}^{b}\left|qk_\alpha(\tau,t)\right|
\cdot\left|g(t)\right|dt\right)\right]d\tau\\
&\leq \int_a^b\left\{\left|f(\tau)\right|\left[\left(
\int_a^b\left|pk_\alpha(t,\tau)\right|^2dt\right)^{\frac{1}{2}}\left(
\int_a^b\left|g(t)\right|^2dt\right)^{\frac{1}{2}}\right.\right.\\
&\qquad\qquad \left.\left.+\left(\int_a^b\left|qk_\alpha(\tau,t)\right|^2
dt\right)^{\frac{1}{2}}\left(\int_a^b\left|g(t)\right|^2
dt\right)^{\frac{1}{2}}\right]\right\}d\tau.
\end{split}
\end{equation*}
By Fubini's theorem, functions $k_{\alpha,\tau}(t):=k_{\alpha}(t,\tau)$
and $\hat{k}_{\alpha,\tau}(t):=k_{\alpha}(\tau,t)$
belong to $L_2\left([a,b]\right)$ for almost all $\tau\in[a,b]$. Therefore,
\begin{equation*}
\begin{split}
\int_a^b &\left\{\left|f(\tau)\right|\left[\left(\int_a^b\left|pk_\alpha(t,\tau)\right|^2dt
\right)^{\frac{1}{2}}\left(\int_a^b\left|g(t)\right|^2dt\right)^{\frac{1}{2}}\right.\right.\\
&\qquad\left.\left.+\left(\int_a^b\left|qk_\alpha(\tau,t)\right|^2dt\right)^{\frac{1}{2}}\left(
\int_a^b\left|g(t)\right|^2dt\right)^{\frac{1}{2}}\right]\right\}d\tau\\
&=\left\|g\right\|_2\int_a^b\left[\left|f(\tau)\right|\left(\left\|pk_{\alpha,\tau}\right\|_2
+\left\|q\hat{k}_{\alpha,\tau}\right\|_2\right)\right]d\tau\\
&\leq \left\|g\right\|_2 \left(\int_a^b\left|f(\tau)\right|^2d\tau\right)^{\frac{1}{2}}\left(
\int_a^b\left|\left\|pk_{\alpha,\tau}\right\|_2+\left\|q\hat{k}_{\alpha,\tau}\right\|_2\right|^2
d\tau\right)^{\frac{1}{2}}\\
&\leq\left\|g\right\|_2\cdot\left\|f\right\|_2\left(\left\|pk_\alpha\right\|_2
+\left\|q k_\alpha\right\|_2\right) < \infty.
\end{split}
\end{equation*}
Hence, we can use again Fubini's theorem
to change the order of integration:
\begin{equation*}
\begin{split}
\int\limits_a^b g(t)K_P^{\alpha}[f](t)dt
&= p\int\limits_a^b g(t)dt\int\limits_a^t f(\tau)k_{\alpha}(t,\tau)d\tau
+q\int\limits_a^b g(t)dt\int\limits_t^b f(\tau)k_{\alpha}(\tau,t)d\tau\\
&= p\int\limits_a^b f(\tau)d\tau\int\limits_{\tau}^b g(t)k_{\alpha}(t,\tau)dt
+q\int\limits_a^b f(\tau)d\tau\int\limits_a^{\tau} g(t)k_{\alpha}(\tau,t)dt\\
&=\int\limits_a^b f(\tau)K_{P^*}^{\alpha}[g](\tau)d\tau.
\end{split}
\end{equation*}
\end{proof}

\begin{theorem}
\label{thm:IPL1}
Let $0<\alpha<1$ and $P=\langle a,x,b,p,q\rangle$. If $k_{\alpha}(x,t)=k_{\alpha}(x-t)$,
$k_{\alpha}, f\in L_1\left([a,b]\right)$, and $g\in C\left([a,b]\right)$,
then the operator $K_P^{\alpha}$ satisfies the integration
by parts formula \eqref{eq:fracIP:K}.
\end{theorem}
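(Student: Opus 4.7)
The plan is to follow the same architecture as the proof of Theorem~\ref{thm:gfip:Kop}, namely to justify an application of Fubini's theorem and then swap the order of integration in the iterated integral that defines $\int_a^b g(x) K_P^{\alpha}[f](x)\,dx$. The structural difference is only in how we estimate the double integral of the absolute value of the integrand: there, both $f$ and $g$ lived in $L_2$ and Hölder's inequality produced the finite majorant; here, we will exploit $g\in C([a,b])\subset L_\infty([a,b])$, so that $M:=\max_{t\in[a,b]}|g(t)|$ is finite, while $f$ and $k_\alpha$ sit in $L_1([a,b])$.

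First I would introduce, exactly as in the proof of Theorem~\ref{thm:gfip:Kop} but adapted to the difference kernel, the nonnegative measurable function
\[
F(\tau,t):=
\left\{
\begin{array}{ll}
|p\,k_{\alpha}(t-\tau)|\cdot|g(t)|\cdot|f(\tau)| & \mbox{if }\tau\le t,\\[2pt]
|q\,k_{\alpha}(\tau-t)|\cdot|g(t)|\cdot|f(\tau)| & \mbox{if }\tau>t,
\end{array}\right.
\]
on $\Delta=[a,b]\times[a,b]$. Using $|g(t)|\le M$ and performing the linear substitutions $s=t-\tau$ and $s=\tau-t$ respectively in the two inner integrals, each inner integral in $t$ is bounded by $\|k_{\alpha}\|_{L_1([a,b])}$, so
\[
\int_a^b\!\!\int_a^b F(\tau,t)\,dt\,d\tau
\le M\,(|p|+|q|)\,\|k_{\alpha}\|_{L_1([a,b])}\,\|f\|_{L_1([a,b])}<\infty.
\]
This is precisely the analogue of the Hölder estimate in Theorem~\ref{thm:gfip:Kop}, with the $L_2$--$L_2$ pairing of $f$ and $g$ replaced by the $L_1$--$L_\infty$ pairing, together with the $L_1$-bound on the difference-kernel convolutions already implicit in Theorem~\ref{theorem:L1}.

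Once $F\in L_1(\Delta)$ is established, Fubini's theorem lets me swap the order of integration in
\[
\int_a^b g(t) K_P^{\alpha}[f](t)\,dt
= p\!\int_a^b\!\! g(t)\!\int_a^t\!\! f(\tau) k_{\alpha}(t-\tau)\,d\tau\,dt
+ q\!\int_a^b\!\! g(t)\!\int_t^b\!\! f(\tau) k_{\alpha}(\tau-t)\,d\tau\,dt,
\]
producing, after swapping, $p\int_a^b f(\tau)\int_\tau^b g(t) k_{\alpha}(t-\tau)\,dt\,d\tau + q\int_a^b f(\tau)\int_a^\tau g(t) k_{\alpha}(\tau-t)\,dt\,d\tau$, which is exactly $\int_a^b f(\tau) K_{P^*}^{\alpha}[g](\tau)\,d\tau$ with $P^{*}=\langle a,\tau,b,q,p\rangle$. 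This gives \eqref{eq:fracIP:K}.

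The only delicate step is the verification of $F\in L_1(\Delta)$; everything else is a cosmetic rewriting of the integrals. The substitution step must be handled carefully because the difference-kernel hypothesis $k_{\alpha}(x,t)=k_{\alpha}(x-t)$ is what makes the inner integrals translate to norms of $k_{\alpha}$ on subintervals of $[0,b-a]$, and hence controlled by $\|k_{\alpha}\|_{L_1([a,b])}$. In the non-difference setting of Theorem~\ref{thm:gfip:Kop} this was unavailable and Hölder in $L_2$ had to compensate; here the difference-kernel assumption is exactly what lets the weaker $L_1\!\times\! L_\infty$ integrability hypothesis still close the argument.
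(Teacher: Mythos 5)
Your proof is correct and follows essentially the same route as the paper's: the same nonnegative majorant $F$ on the square $\Delta$, the same use of the boundedness of the continuous function $g$ together with the $L_1$ control of the difference kernel to verify $F\in L_1(\Delta)$, and then Fubini's theorem to swap the order of integration. (Incidentally, your constant $|p|+|q|$ is the natural one; the paper's $\bigl||p|-|q|\bigr|$ in the corresponding estimate appears to be a typo.)
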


\begin{proof}
Define
\[
F(t,x) := \left\{
\begin{array}{ll}
\left|pk_\alpha(x-t)\right|\cdot\left|g(x)\right|\cdot \left|f(t)\right| & \mbox{if $t \leq x$}\\
\left|qk_\alpha(t-x)\right|\cdot \left|g(x)\right|\cdot\left|f(t)\right| & \mbox{if $t > x$}
\end{array}
\right.\]
for all $(t,x)\in\Delta=[a,b]\times[a,b]$. Since $g$ is a continuous function on $[a,b]$,
it is bounded on $[a,b]$, i.e., there exists a real number $C>0$ such that
$\left|g(x)\right|\leq C$ for all $x\in [a,b]$. Therefore,
\begin{equation*}
\begin{split}
\int_a^b \left(\int_a^b F(t,x)dt\right)dx
&=\int_a^b\left[\left|f(t)\right|\left(\int_{t}^b\left|pk_\alpha(x-t)\right|
\cdot\left|g(x)\right|dx+\int_{a}^{t}\left|qk_\alpha(t-x)\right|
\cdot\left|g(x)\right|dx\right)\right]dt\\
&\leq \int_a^b\left[\left|f(t)\right|\left(\int_{a}^b\left|pk_\alpha(x-t)\right|
\cdot\left|g(x)\right|dx+\int_{a}^{b}\left|qk_\alpha(t-x)\right|
\cdot\left|g(x)\right|dx\right)\right]dt\\
&\leq C\int_a^b\left[\left|f\left(t\right)\right|\left(\int_{a}^b
\left|pk_\alpha(x-t)\right|dx+\int_{a}^{b}\left|qk_\alpha(t-x)\right|dx\right)\right]dt\\
&=C\bigl|\left|p\right|-\left|q\right|\bigr|\left\|k_\alpha\right\|\left\|f\right\|<\infty.
\end{split}
\end{equation*}
Hence, we can use Fubini's theorem to change the order of integration in iterated integrals.
\end{proof}

\begin{theorem}[Generalized fractional integration by parts]
\label{thm:gfip}
Let $\alpha \in (0,1)$ and $P=\langle a,t,b,p,q\rangle$.
If functions $f,K_{P^*}^{1-\alpha}[g] \in AC([a,b])$, and we are in conditions
to use formula \eqref{eq:fracIP:K} (Theorem~\ref{thm:gfip:Kop} or Theorem~\ref{thm:IPL1}), then
\begin{equation}
\label{eq:fip:2}
\int\limits_a^b g(x) B_{P}^\alpha \left[f\right](x)dx
=\left. f(x) K_{P^*}^{1-\alpha}\left[g\right](x)\right|_a^b
-\int_a^b f(x) A_{P^*}^\alpha\left[g\right](x)dx,
\end{equation}
where $P^*=<a,t,b,q,p>$.
\end{theorem}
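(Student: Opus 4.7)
The plan is to decompose $B_P^\alpha$ according to its definition, $B_P^\alpha = K_P^{1-\alpha} \circ D$, so that the left-hand side becomes $\int_a^b g(x)\, K_P^{1-\alpha}[D[f]](x)\, dx$. This is precisely the setting of Theorem~\ref{thm:gfip:Kop} (or Theorem~\ref{thm:IPL1}), applied with order $1-\alpha$ in place of $\alpha$ and with the input function being $D[f]$ rather than $f$. Invoking the hypothesis that we are in conditions to use \eqref{eq:fracIP:K}, I transfer the $K$-operator from $D[f]$ onto $g$, picking up a switch in the parameter set from $P$ to $P^*$, and arrive at $\int_a^b D[f](x)\, K_{P^*}^{1-\alpha}[g](x)\, dx$.

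Next I apply the classical integration by parts formula on $[a,b]$ to this last integral. This is legitimate because both factors are absolutely continuous on $[a,b]$: $f\in AC([a,b])$ by hypothesis, and $K_{P^*}^{1-\alpha}[g]\in AC([a,b])$ is also assumed. Hence
\begin{equation*}
\int_a^b D[f](x)\, K_{P^*}^{1-\alpha}[g](x)\, dx
=\left. f(x)\, K_{P^*}^{1-\alpha}[g](x)\right|_a^b
-\int_a^b f(x)\, D\!\left[K_{P^*}^{1-\alpha}[g]\right]\!(x)\, dx.
\end{equation*}
Finally, by Definition~\ref{def:GRL}, $D\circ K_{P^*}^{1-\alpha} = A_{P^*}^\alpha$, so the second integral on the right is exactly $\int_a^b f(x)\, A_{P^*}^\alpha[g](x)\, dx$, which gives \eqref{eq:fip:2}.

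The only point requiring care is checking that the prerequisites of \eqref{eq:fracIP:K} remain satisfied after substituting $D[f]$ for $f$. Since $f\in AC([a,b])$, we automatically have $D[f]\in L_1([a,b])$, so in the difference-kernel regime Theorem~\ref{thm:IPL1} applies directly; in the square-integrable-kernel regime of Theorem~\ref{thm:gfip:Kop} one needs $D[f]\in L_2([a,b])$, which is not implied by $AC$ alone but is covered by the blanket hypothesis ``we are in conditions to use \eqref{eq:fracIP:K}.'' Apart from this bookkeeping, the proof is essentially a two-line composition of the fractional integration by parts for $K$-op with ordinary integration by parts, with no further computation needed; I expect no genuine obstacle beyond making these hypothesis checks explicit.
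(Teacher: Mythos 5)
Your proof is correct and follows essentially the same route as the paper: decompose $B_P^\alpha = K_P^{1-\alpha}\circ D$, apply the $K$-op integration by parts formula \eqref{eq:fracIP:K} to move the operator onto $g$, then finish with classical integration by parts and identify $D\circ K_{P^*}^{1-\alpha}=A_{P^*}^\alpha$. Your additional remark about verifying that $D[f]$ satisfies the hypotheses of \eqref{eq:fracIP:K} is a sensible piece of bookkeeping that the paper leaves implicit in its blanket assumption.
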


\begin{proof}
From Definition~\ref{def:GC} we know that
$B_P^\alpha[f](x) = K_P^{1-\alpha}\left[D[f]\right](x)$.
It follows that
$$
\int_a^b g(x) B_{P}^\alpha[f](x)dx
= \int_a^b g(x) K_{P}^{1-\alpha}\left[D[f]\right](x) dx.
$$
By relation \eqref{eq:fracIP:K}
$$
\int_a^b g(x) B_{P}^\alpha[f](x) dx
= \int_a^b D[f](x) K_{P^*}^{1-\alpha}[g](x) dx,
$$
and the standard integration by parts formula
implies \eqref{eq:fip:2}:
$$
\int_a^b g(x) B_{P}^\alpha[f](x)dx
=\left. f(x) K_{P^*}^{1-\alpha}[g](x) \right|_a^b
-\int_a^b f(x) D\left[K_{P^*}^{1-\alpha}[g]\right](x)dx.
$$
\end{proof}

\begin{corollary}[\textrm{cf.} \cite{book:Klimek}]
Let $0<\alpha<1$. If $f, {_x I_b^{1-\alpha}} \left[g\right] \in AC([a,b])$, then
\begin{equation*}
\int_{a}^{b}  g(x) \, {^C_aD_x^\alpha}\left[f\right](x)dx
=\left.f(x){_x I_b^{1-\alpha}} \left[g\right](x)\right|^{x=b}_{x=a}
+\int_a^b f(x){_x D_b^\alpha}\left[g\right](x)dx.
\end{equation*}
\end{corollary}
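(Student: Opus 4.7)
The plan is to derive the corollary as a direct specialization of the generalized integration by parts formula \eqref{eq:fip:2} proved in Theorem~\ref{thm:gfip}. Since the standard left Caputo derivative ${^C_a}\textsl{D}_x^\alpha$ has already been identified in the second remark as a particular instance of the generalized Caputo operator $B_P^\alpha$, the work reduces to picking the correct parameter set and kernel and then reading off what each term in \eqref{eq:fip:2} becomes.

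First I would fix the kernel $k_{1-\alpha}(x,t)=k_{1-\alpha}(x-t)=\frac{(x-t)^{-\alpha}}{\Gamma(1-\alpha)}$ and the $p$-set $P=\langle a,t,b,1,0\rangle$, which forces the dual $p$-set to be $P^{*}=\langle a,t,b,0,1\rangle$. Under these choices, the identifications collected in the second remark give $B_P^\alpha[f]={^C_a}\textsl{D}_x^\alpha[f]$ and $K_{P^{*}}^{1-\alpha}[g]={_x}\textsl{I}_b^{1-\alpha}[g]$; the same remark also yields $-A_{P^{*}}^{\alpha}[g]={_x}\textsl{D}_b^{\alpha}[g]$, i.e. $A_{P^{*}}^{\alpha}[g]=-{_x}\textsl{D}_b^{\alpha}[g]$. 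The hypotheses of the corollary, namely $f,{_x}\textsl{I}_b^{1-\alpha}[g]\in AC([a,b])$, translate verbatim into the hypothesis ``$f,K_{P^{*}}^{1-\alpha}[g]\in AC([a,b])$'' of Theorem~\ref{thm:gfip}. Because the kernel is a difference kernel in $L_1([a,b])$, Theorem~\ref{thm:IPL1} provides the integration by parts prerequisite \eqref{eq:fracIP:K} that Theorem~\ref{thm:gfip} relies on.

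With these substitutions made in \eqref{eq:fip:2}, the boundary term becomes $f(x)\,{_x}\textsl{I}_b^{1-\alpha}[g](x)\big|_a^b$, while the integrand on the right transforms as $-f(x)\,A_{P^{*}}^{\alpha}[g](x)=-f(x)\bigl(-{_x}\textsl{D}_b^{\alpha}[g](x)\bigr)=+f(x)\,{_x}\textsl{D}_b^{\alpha}[g](x)$, which is precisely the plus sign appearing in the statement of the corollary. The only subtle point, and the one I would be most careful to highlight, is this bookkeeping of the sign coming from the convention that encodes the right Riemann--Liouville derivative as $-A_{P^{*}}^{\alpha}$ rather than $A_{P^{*}}^{\alpha}$; once this is tracked correctly, the corollary follows with no further computation.
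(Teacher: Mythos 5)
Your proposal is correct and follows exactly the route the paper intends: the corollary is stated as an immediate specialization of Theorem~\ref{thm:gfip} with kernel $k_{1-\alpha}(x-t)=\frac{(x-t)^{-\alpha}}{\Gamma(1-\alpha)}$ and $P=\langle a,t,b,1,0\rangle$, so that $B_P^\alpha={^C_a}\textsl{D}_x^\alpha$, $K_{P^*}^{1-\alpha}={_x}\textsl{I}_b^{1-\alpha}$ and $A_{P^*}^\alpha=-{_x}\textsl{D}_b^\alpha$, the last identification producing the plus sign. Your sign bookkeeping and the appeal to Theorem~\ref{thm:IPL1} (rather than Theorem~\ref{thm:gfip:Kop}, whose square-integrability hypothesis fails for this kernel when $\alpha\ge 1/2$) are exactly right.
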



\section{The generalized fundamental variational problem}
\label{sec:fp}

By $\partial_{i} F$ we denote the partial
derivative of a function $F$ with respect to its $i$th argument.
We consider the problem of finding a function
$y= t \mapsto y(t)$, $t\in[a,b]$,
that gives an extremum (minimum or maximum)
to the functional
\begin{equation}
\label{eq:1}
\mathcal{J}(y)=K_{P_1}^\alpha\left[t \mapsto
F\left(t,y(t),y'(t),B_{P_2}^\beta \left[y\right](t),
K_{P_3}^\gamma\left[y\right](t)\right)\right](b)
\end{equation}
when subject to the boundary conditions
\begin{equation}
\label{eq:2}
y(a)=y_a, \quad y(b)=y_b,
\end{equation}
where $\alpha,\beta, \gamma\in(0,1)$, $P_1=<a,b,b,1,0>$
and $P_j=<a,t,b,p_j,q_j>$, $j=2,3$. For simplicity of notation we introduce
the operator $\left\{ \cdot \right\}_{P_2, P_3}^{\beta,\gamma}$ defined by
\begin{equation*}
\left\{y\right\}_{P_2, P_3}^{\beta,\gamma}(t)
=\left(t,y(t),y'(t),B_{P_2}^\beta \left[\tau \mapsto y(\tau)\right](t),
K_{P_3}^\gamma \left[\tau \mapsto y(\tau)\right](t)\right).
\end{equation*}
With the new notation one can write \eqref{eq:1} simply as
$\mathcal{J}(y)=K_{P_1}^\alpha\left[F\left\{y\right\}_{P_2, P_3}^{\beta,\gamma}\right](b)$.
The operator $K_{P_1}^\alpha$ has kernel $k_\alpha(x,t)$, and operators $B_{P_2}^\beta$
and $K_{P_3}^\gamma$ have kernels $h_{1-\beta}(t,\tau)$ and $h_\gamma(t,\tau)$, respectively.
In the sequel we assume that:
\begin{enumerate}
\item[(H1)] Lagrangian $F\in C^1\left([a,b]\times\mathbb{R}^4;\mathbb{R}\right)$;

\item[(H2)]  functions
$A_{P_2^*}^\beta\left[\tau \mapsto k_\alpha(b,\tau)\partial_4F\left\{y\right\}_{P_2, P_3}^{\beta,\gamma}(\tau)\right]$,
$K_{P_3^*}^\gamma\left[\tau \mapsto k_\alpha(b,\tau)\partial_5 F\left\{y\right\}_{P_2, P_3}^{\beta,\gamma}(\tau)\right]$,\\
$D\left[t \mapsto \partial_3F\left\{y\right\}_{P_2,P_3}^{\beta,\gamma}(t)k_\alpha(b,t)\right]$ and
$t \mapsto k_\alpha(b,t)\partial_2 F \left\{y\right\}_{P_2, P_3}^{\beta,\gamma}(t)$
are continuous on $(a,b)$;

\item[(H3)] functions $t \mapsto \partial_3F\left\{y\right\}_{P_2, P_3}^{\beta,\gamma}(t)k_\alpha(b,t)$,
$K_{P_2^*}^{1-\beta}\left[\tau \mapsto k_\alpha(b,\tau)
\partial_4 F\left\{y\right\}_{P_2, P_3}^{\beta,\gamma}(\tau)\right] \in AC([a,b])$;

\item[(H4)] kernels $k_\alpha(x,t)$, $h_{1-\beta}(t,\tau)$ and  $h_\gamma(t,\tau)$
are such that we are in conditions to use Theorems~\ref{thm:gfip:Kop},
\ref{thm:IPL1} and \ref{thm:gfip}.
\end{enumerate}

\begin{definition}
A function $y\in C^1\left([a,b];\mathbb{R}\right)$ is said to be
admissible for the fractional variational problem \eqref{eq:1}--\eqref{eq:2},
if functions $B_{P_2}^\beta[y]$ and $K_{P_3}^\gamma[y]$ exist and are continuous on the
interval $[a,b]$, and $y$ satisfies the given boundary conditions \eqref{eq:2}.
\end{definition}

\begin{theorem}
\label{theorem:ELCaputo}
If $y$ is a solution to problem \eqref{eq:1}--\eqref{eq:2},
then $y$ satisfies the generalized Euler--Lagrange equation
\begin{multline}
\label{eq:eqELCaputo}
k_\alpha(b,t)\partial_2 F \left\{y\right\}_{P_2, P_3}^{\beta,\gamma}(t)
-\frac{d}{dt}\left(\partial_3F\left\{y\right\}_{P_2, P_3}^{\beta,\gamma}(t)k_\alpha(b,t)\right)\\
-A_{P_2^*}^\beta\left[\tau \mapsto k_\alpha(b,\tau)\partial_4 F\left\{y\right\}_{P_2, P_3}^{\beta,\gamma}(\tau)\right](t)
+K_{P_3^*}^\gamma\left[\tau \mapsto k_\alpha(b,\tau)\partial_5 F\left\{y\right\}_{P_2, P_3}^{\beta,\gamma}(\tau)\right](t)=0
\end{multline}
for all $t\in(a,b)$.
\end{theorem}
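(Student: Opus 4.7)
The plan is to apply the standard variational technique: perturb an extremizer $y$ by a smooth admissible variation $\eta$, differentiate the resulting one-parameter family at zero, reduce the first variation to a single integral against $\eta$ by using the generalized fractional integration by parts formulas of Section~\ref{sec:fip}, and then conclude by the fundamental lemma of the calculus of variations.

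First I would unfold the definition of $\mathcal{J}$. Because $P_1=\langle a,b,b,1,0\rangle$, the outer $K_{P_1}^\alpha$-operation evaluated at $b$ is simply the weighted integral
$$\mathcal{J}(y)=\int_a^b k_\alpha(b,t)\,F\left\{y\right\}_{P_2,P_3}^{\beta,\gamma}(t)\,dt.$$
Let $\eta\in C^1([a,b];\mathbb{R})$ with $\eta(a)=\eta(b)=0$, so that $y+\varepsilon\eta$ is admissible for small $\varepsilon$. By (H1) and linearity of $B_{P_2}^\beta$ and $K_{P_3}^\gamma$, the map $\varepsilon\mapsto\mathcal{J}(y+\varepsilon\eta)$ is differentiable at $0$, and the first-order necessary condition gives
$$\int_a^b k_\alpha(b,t)\bigl[\partial_2 F\cdot\eta+\partial_3 F\cdot\eta'+\partial_4 F\cdot B_{P_2}^\beta[\eta]+\partial_5 F\cdot K_{P_3}^\gamma[\eta]\bigr](t)\,dt=0,$$
where every $\partial_i F$ is evaluated along $\{y\}_{P_2,P_3}^{\beta,\gamma}$.

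Next I would treat each of the last three terms in order to pull $\eta(t)$ out as a common factor. For $\partial_3 F\cdot\eta'$, classical integration by parts combined with the absolute-continuity assumption in (H3) yields $-\int_a^b\eta(t)\frac{d}{dt}\!\left(k_\alpha(b,t)\partial_3 F\right)dt$, the boundary contribution vanishing because $\eta(a)=\eta(b)=0$. For the $\partial_4 F\cdot B_{P_2}^\beta[\eta]$ term, I apply Theorem~\ref{thm:gfip} with $g(t)=k_\alpha(b,t)\partial_4 F$ and $f=\eta$; hypotheses (H3)--(H4) authorize the step, and again the boundary term $[\eta\cdot K_{P_2^*}^{1-\beta}[\,\cdot\,]]_a^b$ disappears. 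For the $\partial_5 F\cdot K_{P_3}^\gamma[\eta]$ term, Theorem~\ref{thm:gfip:Kop} (or Theorem~\ref{thm:IPL1}, depending on the kernel) under (H4) rewrites it as $\int_a^b\eta(t)\,K_{P_3^*}^\gamma[k_\alpha(b,\cdot)\partial_5 F](t)\,dt$.

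Assembling the four contributions yields $\int_a^b\eta(t)\,E(t)\,dt=0$, where $E(t)$ is exactly the left-hand side of \eqref{eq:eqELCaputo}. Hypothesis (H2) guarantees $E\in C((a,b))$, so the fundamental lemma of the calculus of variations, applied to arbitrary $\eta\in C^1([a,b])$ vanishing at the endpoints, forces $E(t)\equiv 0$ on $(a,b)$, which is the claimed Euler--Lagrange equation. The main obstacle is administrative rather than conceptual: one must carefully check that hypotheses (H1)--(H4) actually license each integration by parts, in particular that the boundary contributions produced by Theorem~\ref{thm:gfip} really vanish under the stated regularity, and one must keep careful track of which parameter set ($P_j$ or its dual $P_j^*$) is attached to each operator after the duality \eqref{eq:fracIP:K} is invoked.
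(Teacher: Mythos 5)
Your proposal matches the paper's proof essentially step for step: the same reduction of $K_{P_1}^\alpha[\cdot](b)$ to a weighted integral, the same one-parameter variation $y+\varepsilon\eta$ with $\eta(a)=\eta(b)=0$, the same three integrations by parts (classical for the $\partial_3 F$ term, Theorem~\ref{thm:gfip} for the $\partial_4 F$ term, Theorem~\ref{thm:gfip:Kop} or \ref{thm:IPL1} for the $\partial_5 F$ term), and the same conclusion via the fundamental lemma. The only point worth noting is that the paper additionally requires $\eta$ to have continuous $B$-op and $K$-op (admissibility of the variation), which your ``administrative'' caveat already covers.
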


\begin{proof}
Suppose that $y$ is an extremizer of $\mathcal{J}$.
Consider the value of $\mathcal{J}$ at a nearby function
$\hat{y}=y+\varepsilon\eta$,
where $\varepsilon\in\mathbb{R}$ is a small parameter,
and $\eta\in C^1\left([a,b];\mathbb{R}\right)$ is an arbitrary
function with continuous $B$-op and $K$-op.
We require that $\eta(a)=\eta(b)=0$. Let
\begin{equation*}
\begin{split}
\mathcal{J}(\hat{y})&=J(\varepsilon)=K_{P_1}^\alpha\left[t \mapsto
F\left(t,\hat{y}(t),\hat{y}'(t),B_{P_2}^\beta \left[\hat{y}\right](t),
K_{P_3}^\gamma \left[\hat{y}\right](t)\right)\right](b)\\
&=\int\limits_a^b k_\alpha(b,t)F\left(t,y(t)+\varepsilon\eta(t),\frac{d}{dt}\left(y(t)
+\varepsilon\eta(t)\right),B_{P_2}^\beta \left[y+\varepsilon\eta\right](t),
K_{P_3}^\gamma\left[y+\varepsilon\eta\right](t)\right)dt.
\end{split}
\end{equation*}
A necessary condition for $y$ to be an extremizer is given by
\begin{equation}
\label{eq:3}
\begin{split}
\left.\frac{d J}{d\varepsilon}\right|_{\varepsilon=0}=0
&\Leftrightarrow K_{P_1}^\alpha\biggl[
\partial_2 F  \left\{y\right\}_{P_2, P_3}^{\beta,\gamma}  \eta
+\partial_3 F  \left\{y\right\}_{P_2, P_3}^{\beta,\gamma}  D[\eta]\\
&\qquad \qquad + \partial_4 F \left\{y\right\}_{P_2, P_3}^{\beta,\gamma}
B_{P_2}^\beta\left[\eta\right]
+\partial_5 F\left\{y\right\}_{P_2, P_3}^{\beta,\gamma}
K_{P_3}^\gamma \left[\eta\right]\biggr](b)=0\\
&\Leftrightarrow
\int\limits_a^b
\Biggl(\partial_2 F  \left\{y\right\}_{P_2, P_3}^{\beta,\gamma}(t)\eta(t)
+\partial_3 F  \left\{y\right\}_{P_2, P_3}^{\beta,\gamma}(t)\frac{d}{dt}\eta(t)\\
&\qquad + \partial_4 F \left\{y\right\}_{P_2, P_3}^{\beta,\gamma}(t)
B_{P_2}^\beta\left[\eta\right](t)
+\partial_5 F\left\{y\right\}_{P_2, P_3}^{\beta,\gamma}(t)
K_{P_3}^\gamma \left[\eta\right](t)\Biggr)k_\alpha(b,t)dt = 0.
\end{split}
\end{equation}
Using classical and generalized fractional integration by parts formulas
(Theorems~\ref{thm:gfip:Kop}, \ref{thm:IPL1} and \ref{thm:gfip}),
\begin{multline*}
\int_a^b\partial_3F\left\{y\right\}_{P_2, P_3}^{\beta,\gamma}(t)k_\alpha(b,t)
\frac{d}{dt} \eta(t) dt\\
=\left.\partial_3F\left\{y\right\}_{P_2, P_3}^{\beta,\gamma}(t)
k_\alpha(b,t)\eta(t)\right|_a^b
-\int_a^b \frac{d}{dt}\left(\partial_3F\left\{y\right\}_{P_2,
P_3}^{\beta,\gamma}(t)k_\alpha(b,t)\right) \eta(t) dt,
\end{multline*}
\begin{multline*}
\int\limits_a^b\partial_4 F\left\{y\right\}_{P_2, P_3}^{\beta,\gamma}(t)k_\alpha(b,t)
B_{P_2}^\beta\left[\eta\right](t)dt\\
= \left. K_{P_2^*}^{1-\beta}\left[\tau \mapsto k_\alpha(b,\tau)
\partial_4 F\left\{y\right\}_{P_2, P_3}^{\beta,\gamma}(\tau)\right](t) \eta(t) \right|_a^b
-\int\limits_a^b A_{P_2^*}^\beta\left[\tau \mapsto k_\alpha(b,\tau)
\partial_4 F\left\{y\right\}_{P_2, P_3}^{\beta,\gamma}(\tau)\right](t) \, \eta(t) dt
\end{multline*}
and
\begin{equation*}
\int\limits_a^b k_\alpha(b,t)\partial_5 F\left\{y\right\}_{P_2, P_3}^{\beta,\gamma}(t)
K_{P_3}^\gamma\left[\eta\right](t) dt
=\int\limits_a^b K_{P_3^*}^\gamma\left[\tau \mapsto k_\alpha(b,\tau)
\partial_5 F\left\{y\right\}_{P_2, P_3}^{\beta,\gamma}(\tau)\right](t) \, \eta(t) dt,
\end{equation*}
where $P_j^*=<a,t,b,q_j,p_j>$, $j=2,3$.
Because $\eta(a)=\eta(b)=0$, \eqref{eq:3} simplifies to
\begin{multline*}
\int_a^b \Biggr\{k_\alpha(b,t)\partial_2 F \left\{y\right\}_{P_2, P_3}^{\beta,\gamma}(t)
-\frac{d}{dt}\left(\partial_3F\left\{y\right\}_{P_2, P_3}^{\beta,\gamma}(t)k_\alpha(b,t)\right)\\
-A_{P_2^*}^\beta\left[\tau \mapsto k_\alpha(b,\tau)\partial_4 F\left\{y\right\}_{P_2, P_3}^{\beta,\gamma}(\tau)\right](t)
+K_{P_3^*}^\gamma\left[\tau \mapsto k_\alpha(b,\tau)\partial_5 F\left\{y\right\}_{P_2,P_3}^{\beta,\gamma}(\tau)\right](t)\Biggr\}
\eta(t) dt=0.
\end{multline*}
We obtain \eqref{eq:eqELCaputo} by application of the fundamental lemma
of the calculus of variations (see, \textrm{e.g.}, \cite[Section~2.2]{G:H}).
\end{proof}

The next corollary gives an extension of the main result of \cite{jmp}.

\begin{corollary}
\label{corollary:fund1}
If $y$ is a solution to the problem of minimizing or maximizing
\begin{equation}
\label{eq:4}
\mathcal{J}(y)={_{a}}\textsl{I}^{\alpha}_{b}\left[t \mapsto
F\left(t,y(t),y'(t), \, {^C_aD_t^\beta} \left[y\right](t)\right)\right](b)
\end{equation}
in the class $y\in C^1\left([a,b];\mathbb{R}\right)$ subject to the boundary conditions
\begin{equation}
\label{eq:7}
y(a)=y_a, \quad y(b)=y_b,
\end{equation}
where $\alpha,\beta\in(0,1)$, $F\in C^1\left([a,b]\times \mathbb{R}^3;\mathbb{R}\right)$
and $\tau \mapsto (b-\tau)^{\alpha-1}
\partial_4 F\left(\tau,y(\tau),y'(\tau), \, {^C_aD_\tau^\beta} \left[y\right](\tau)\right)$
has continuous Riemann-Liouville fractional derivative $_{t}D^{\beta}_{b}$, then
\begin{multline}
\label{eq:5}
\partial_2 F\left(t,y(t),y'(t), \, {^C_aD_t^\beta} \left[y\right](t)\right)\cdot(b-t)^{\alpha-1}
-\frac{d}{dt}\left\{\partial_3 F\left(t,y(t),y'(t),
\, {^C_aD_t^\beta} \left[y\right](t)\right) \cdot (b-t)^{\alpha-1}\right\}\\
+{_{t}D^{\beta}_{b}}\left[\tau \mapsto (b-\tau)^{\alpha-1}
\partial_4 F\left(\tau,y(\tau),y'(\tau), \, {^C_aD_\tau^\beta} \left[y\right](\tau)\right)\right](t)=0
\end{multline}
for all $t\in(a,b)$.
\end{corollary}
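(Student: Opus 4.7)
The plan is to derive this corollary as a direct specialization of Theorem~\ref{theorem:ELCaputo}. First, I would identify the parameter sets that reduce the generalized functional \eqref{eq:1} to the specific functional \eqref{eq:4}: take $P_1=\langle a,b,b,1,0\rangle$ with kernel $k_\alpha(x,t)=\frac{1}{\Gamma(\alpha)}(x-t)^{\alpha-1}$, so that $K_{P_1}^\alpha[\,\cdot\,](b)={_a}\textsl{I}_b^\alpha[\,\cdot\,](b)$; take $P_2=\langle a,t,b,1,0\rangle$ with $h_{1-\beta}(t,\tau)=\frac{1}{\Gamma(1-\beta)}(t-\tau)^{-\beta}$, so that $B_{P_2}^\beta={^C_a}\textsl{D}_t^\beta$; and suppress the $K_{P_3}^\gamma$ dependence by regarding $F$ as independent of its fifth argument, i.e.\ $\partial_5 F\equiv 0$.

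Next I would compute the adjoint $P_2^*=\langle a,t,b,0,1\rangle$ and invoke the remark after Definition~\ref{def:GC} which identifies $A_{P_2^*}^\beta[f]=-\,{_t}\textsl{D}_b^\beta[f]$, so that the $-A_{P_2^*}^\beta[\,\cdot\,]$ term in \eqref{eq:eqELCaputo} becomes $+\,{_t}\textsl{D}_b^\beta[\,\cdot\,]$. Substituting these choices into \eqref{eq:eqELCaputo}, noting that the $K_{P_3^*}^\gamma$ term disappears, and using linearity of $\frac{d}{dt}$ and ${_t}\textsl{D}_b^\beta$ to pull out the common factor $1/\Gamma(\alpha)$ and cancel it across the whole equation, yields \eqref{eq:5} term by term. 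This substitution is essentially bookkeeping.

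The more delicate step is to verify that hypotheses (H1)--(H4) of Theorem~\ref{theorem:ELCaputo} are all implied by the weaker assumptions listed in the corollary. Hypothesis (H1) is given. Under $y\in C^1([a,b];\mathbb{R})$ and $F\in C^1$, the functions $t\mapsto\partial_2 F(\cdots)(b-t)^{\alpha-1}$ and $t\mapsto\partial_3 F(\cdots)(b-t)^{\alpha-1}$ are continuous on $(a,b)$ (and the latter is absolutely continuous on any compact subinterval since $y'\in C$ and $(b-t)^{\alpha-1}$ is smooth on $(a,b)$), so the relevant parts of (H2) and (H3) hold; the explicit hypothesis that $\tau\mapsto(b-\tau)^{\alpha-1}\partial_4 F(\cdots)$ has continuous right Riemann--Liouville derivative ${_t}\textsl{D}_b^\beta$ is precisely the remaining continuity/AC requirement in (H2) and (H3) after translation via $A_{P_2^*}^\beta=-\,{_t}\textsl{D}_b^\beta$. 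For (H4), the kernels $(x-t)^{\alpha-1}$ and $(t-\tau)^{-\beta}$ are $L_1$ difference kernels on $[a,b]$, so Theorem~\ref{thm:IPL1} (combined with Theorem~\ref{thm:gfip}) supplies the required integration-by-parts formulas.

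The main obstacle is therefore not symbolic but book-keeping about regularity: matching the omnibus hypotheses (H2)--(H3) — which involve several composite operations — to the single explicit regularity condition in the corollary. Everything else reduces to recognizing the standard operators as particular cases of the generalized ones with the parameter choices above, and then invoking Theorem~\ref{theorem:ELCaputo}.
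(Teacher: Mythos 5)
Your proposal is correct and follows essentially the same route as the paper: the paper's proof simply chooses $k_\alpha(x,t)=\frac{1}{\Gamma(\alpha)}(x-t)^{\alpha-1}$, $h_{1-\beta}(t,\tau)=\frac{1}{\Gamma(1-\beta)}(t-\tau)^{-\beta}$ and $P_2=\langle a,t,b,1,0\rangle$, and invokes Theorem~\ref{theorem:ELCaputo} with $\partial_5 F=0$. You supply more detail than the paper does (the sign conversion $-A_{P_2^*}^\beta[\,\cdot\,]={_t}\textsl{D}_b^\beta[\,\cdot\,]$, the cancellation of the factor $1/\Gamma(\alpha)$, and the matching of hypotheses (H1)--(H4)), but the underlying argument is the same.
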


\begin{proof}
Choose $k_\alpha(x,t)=\frac{1}{\Gamma(\alpha)}(x-t)^{\alpha-1}$,
$h_{1-\beta}(t,\tau)=\frac{1}{\Gamma(1-\beta)}(t-\tau)^{-\beta}$,
and $P_2=<a,t,b,1,0>$. Then the $K$-op, the $A$-op and the $B$-op reduce
to the left fractional integral, the left Riemann-Liouville
and the left Caputo fractional derivatives, respectively.
Therefore, problem \eqref{eq:4}--\eqref{eq:7} is a particular case
of problem \eqref{eq:1}--\eqref{eq:2} and \eqref{eq:5} follows
from \eqref{eq:eqELCaputo} with $\partial_5 F=0$.
\end{proof}

The following result is the Caputo analogous to the main result of \cite{DerInt}
done for the Riemann--Liouville fractional derivative.

\begin{corollary}
\label{corollary:fund2}
Let $\beta, \gamma\in (0,1)$. If $y$ is a solution to the problem
\begin{gather*}
\int\limits_a^b F\left(t,y(t),y'(t), {_{a}^{C}}\textsl{D}_t^\beta [y](t),
{_{a}}\textsl{I}_t^\gamma [y](t)\right)dt \longrightarrow \textrm{extr}\\
y \in C^1\left([a,b]; \mathbb{R}\right)\\
y(a)=y_a, \quad y(b)=y_b,
\end{gather*}
then
\begin{multline}
\label{eq:9}
\partial_2 F\left(t,y(t),y'(t), {{_{a}^{C}}\textsl{D}}_t^\beta[y](t),
{_{a}}\textsl{I}_t^\gamma[y](t)\right)
- \frac{d}{dt}\partial_3 F\left(t,y(t),y'(t),
{_{a}^{C}}\textsl{D}_t^\beta[y](t),{_{a}}\textsl{I}_t^\gamma[y](t)\right)\\
+ {_{t}}\textsl{D}_b^\beta \left[\tau \mapsto \partial_4 F\left(\tau,y(\tau),y'(\tau),
{_{a}^{C}}\textsl{D}_\tau^\beta[y](\tau),{_{a}}\textsl{I}_\tau^\gamma[y](\tau)\right)\right](t)\\
+{_{t}}\textsl{I}_b^\beta \left[\tau \mapsto \partial_5 F\left(\tau,y(\tau),y'(\tau),
{_{a}^{C}}\textsl{D}_\tau^\beta[y](\tau),{_{a}}\textsl{I}_\tau^\gamma[y](\tau)\right)\right](t) = 0
\end{multline}
holds for all $t \in [a,b]$.
\end{corollary}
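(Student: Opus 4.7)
The plan is to recognize the statement as an immediate specialization of Theorem~\ref{theorem:ELCaputo}: I would choose kernels and parameter sets so that the general functional \eqref{eq:1} collapses onto the classical integral with a Caputo derivative and a Riemann--Liouville integral inside the Lagrangian, and then read off \eqref{eq:9} from \eqref{eq:eqELCaputo}. No new analytical estimate is required; everything is bookkeeping.

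First I would fix the data. Take $k_\alpha(x,t)\equiv 1$ with $P_1=\langle a,b,b,1,0\rangle$, so that $K_{P_1}^\alpha[F](b)=\int_a^b F(t)\,dt$, reproducing the classical outer integral. Take $h_{1-\beta}(t,\tau)=(t-\tau)^{-\beta}/\Gamma(1-\beta)$ with $P_2=\langle a,t,b,1,0\rangle$, which by the remark following Definition~\ref{def:GC} gives $B_{P_2}^\beta[y]={^{C}_{a}}\textsl{D}_t^\beta[y]$. Take $h_\gamma(t,\tau)=(t-\tau)^{\gamma-1}/\Gamma(\gamma)$ with $P_3=\langle a,t,b,1,0\rangle$, so that $K_{P_3}^\gamma[y]={_{a}}\textsl{I}_t^\gamma[y]$. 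The adjoint parameter sets are then $P_2^*=P_3^*=\langle a,t,b,0,1\rangle$, for which the remarks in Section~\ref{sec:prelim} yield $K_{P_3^*}^\gamma[g]={_{t}}\textsl{I}_b^\gamma[g]$ and $A_{P_2^*}^\beta[g]=-{_{t}}\textsl{D}_b^\beta[g]$.

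Next I would substitute these identifications into \eqref{eq:eqELCaputo}. Because $k_\alpha(b,t)\equiv 1$, every prefactor $k_\alpha(b,\cdot)$ disappears. The summand containing $-A_{P_2^*}^\beta$ acquires a $+{_{t}}\textsl{D}_b^\beta$ (the two minus signs combine), the summand with $K_{P_3^*}^\gamma$ becomes the desired ${_{t}}\textsl{I}_b^\gamma$ term, and the first two summands involving $\partial_2 F$ and $\tfrac{d}{dt}\partial_3 F$ are unchanged. Thus \eqref{eq:eqELCaputo} reduces verbatim to \eqref{eq:9}.

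The one technical point that needs care is verifying hypotheses (H1)--(H4) of Theorem~\ref{theorem:ELCaputo}. (H1) is given; (H2)--(H3) translate to standard regularity conditions implicit in the corollary's statement, in particular the existence of the right Riemann--Liouville derivatives and integrals acting on the partial derivatives of $F$ along $y$. The most delicate point is (H4): since the Riemann--Liouville kernels may fail to be square-integrable when $\beta\geq 1/2$ or $\gamma\leq 1/2$, I would invoke the $L^1$-based Theorem~\ref{thm:IPL1} rather than the $L^2$-based Theorem~\ref{thm:gfip:Kop}. Both kernels are $L^1$ difference kernels on $[a,b]$, which suffices, so all integration by parts steps used in the proof of Theorem~\ref{theorem:ELCaputo} remain justified and the argument is complete.
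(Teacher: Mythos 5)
Your proof is correct and is essentially the paper's own argument: the paper's proof is precisely the one-line specialization $k_\alpha\equiv 1$, $P_1=\langle a,b,b,1,0\rangle$, $P_2=P_3=\langle a,t,b,1,0\rangle$ with the Riemann--Liouville kernels $h_{1-\beta}$ and $h_\gamma$, exactly as you set up. Your added care with (H4) --- observing that the kernels are $L^1$ difference kernels so Theorem~\ref{thm:IPL1} applies even when square-integrability fails (e.g.\ $\beta\geq 1/2$ or $\gamma\leq 1/2$) --- is a detail the paper omits; note also that your derivation correctly produces ${_{t}}\textsl{I}_b^{\gamma}$ in the last term, so the exponent $\beta$ on that operator in the paper's \eqref{eq:9} is evidently a typographical slip.
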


\begin{proof}
The Euler--Lagrange equation \eqref{eq:9}
follows from \eqref{eq:eqELCaputo} by choosing
$p$-sets $P_1=<a,b,b,1,0>$, $P_2=P_3=<a,t,b,1,0>$, and kernels $k_\alpha(x,t)=1$,
$h_{1-\beta}(t,\tau)=\frac{1}{\Gamma(1-\beta)}(t-\tau)^{-\beta}$,
and $h_{\gamma}(t,\tau)=\frac{1}{\Gamma(\gamma)}(t-\tau)^{\gamma-1}$.
\end{proof}

\begin{remark}
In the particular case when the Lagrangian $F$ of Corollary~\ref{corollary:fund2}
does not depend on the fractional integral and the classical derivative, one obtains
from \eqref{eq:9} the Euler--Lagrange equation of \cite{fred:tor}.
\end{remark}


\section{Generalized free-boundary variational problems}
\label{sec:fpfb}

Assume now that in problem \eqref{eq:1}--\eqref{eq:2}
the boundary conditions \eqref{eq:2} are substituted by
\begin{equation}
\label{eq:Free1}
y(a) \textnormal{ is free } \textnormal{ and } y(b)=y_b.
\end{equation}

\begin{theorem}
\label{theorem:NatBound}
If $y$ is a solution to the problem of extremizing functional \eqref{eq:1}
with \eqref{eq:Free1} as boundary conditions, then $y$ satisfies the
Euler--Lagrange equation \eqref{eq:eqELCaputo}. Moreover,
the extra natural boundary condition
\begin{equation}
\label{eq:NatBoundCond}
\partial_3 F\left\{y\right\}_{P_2, P_3}^{\beta,\gamma}(a)k_\alpha(b,a)
+K_{P_2^*}^{1-\beta}\left[\tau \mapsto \partial_4 F\left\{y\right\}_{P_2,
P_3}^{\beta,\gamma}(\tau)k_\alpha(b,\tau)\right](a)=0
\end{equation}
holds.
\end{theorem}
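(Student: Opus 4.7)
The plan is to mimic the proof of Theorem~\ref{theorem:ELCaputo} verbatim, only relaxing the constraint on the admissible variations at the left endpoint, and then exploit the extra freedom to extract a second necessary condition.

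I take a one-parameter family $\hat y = y + \varepsilon\eta$ with $\eta\in C^1([a,b];\mathbb R)$ admissible in the sense that $B_{P_2}^\beta[\eta]$ and $K_{P_3}^\gamma[\eta]$ are continuous; the only pointwise restriction I impose on $\eta$ comes from the fixed-endpoint side of \eqref{eq:Free1}, namely $\eta(b)=0$, while $\eta(a)$ is left free. Computing $\left.\tfrac{dJ}{d\varepsilon}\right|_{\varepsilon=0}=0$ produces exactly the first-variation identity obtained in the proof of Theorem~\ref{theorem:ELCaputo}, and the three integration-by-parts steps (classical on the $y'$ term and the generalized fractional ones of Theorems~\ref{thm:gfip:Kop}, \ref{thm:IPL1}, \ref{thm:gfip} on the $B$-op and $K$-op terms) go through unchanged. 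The $K_{P_3}^\gamma$ contribution carries no boundary term, so the only surviving boundary contributions come from the first two formulas, evaluated at $a$ and $b$.

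Collecting them, the first-variation condition reads
\begin{multline*}
\int_a^b\!\Biggl\{k_\alpha(b,t)\partial_2 F\{y\}_{P_2,P_3}^{\beta,\gamma}(t)
-\tfrac{d}{dt}\bigl(\partial_3F\{y\}_{P_2,P_3}^{\beta,\gamma}(t)k_\alpha(b,t)\bigr)\\
-A_{P_2^*}^\beta\!\left[\tau\mapsto k_\alpha(b,\tau)\partial_4F\{y\}_{P_2,P_3}^{\beta,\gamma}(\tau)\right]\!(t)
+K_{P_3^*}^\gamma\!\left[\tau\mapsto k_\alpha(b,\tau)\partial_5F\{y\}_{P_2,P_3}^{\beta,\gamma}(\tau)\right]\!(t)\Biggr\}\eta(t)\,dt\\
+\left.\Bigl[\partial_3F\{y\}_{P_2,P_3}^{\beta,\gamma}(t)k_\alpha(b,t)
+K_{P_2^*}^{1-\beta}\!\left[\tau\mapsto k_\alpha(b,\tau)\partial_4F\{y\}_{P_2,P_3}^{\beta,\gamma}(\tau)\right]\!(t)\Bigr]\eta(t)\right|_a^b=0.
\end{multline*}
Because $\eta(b)=0$, the upper end of the boundary bracket drops out, leaving only a term proportional to $\eta(a)$.

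First I restrict to $\eta$ with $\eta(a)=\eta(b)=0$; the boundary bracket then vanishes identically, and the fundamental lemma of the calculus of variations forces the integrand in braces to be zero on $(a,b)$, which is precisely the Euler--Lagrange equation \eqref{eq:eqELCaputo}. Second, I return to general $\eta$ with only $\eta(b)=0$; since \eqref{eq:eqELCaputo} now annihilates the integral, the identity collapses to
\begin{equation*}
-\Bigl[\partial_3 F\{y\}_{P_2,P_3}^{\beta,\gamma}(a)k_\alpha(b,a)
+K_{P_2^*}^{1-\beta}\!\left[\tau\mapsto \partial_4F\{y\}_{P_2,P_3}^{\beta,\gamma}(\tau)k_\alpha(b,\tau)\right]\!(a)\Bigr]\eta(a)=0,
\end{equation*}
and the arbitrariness of $\eta(a)\in\mathbb R$ yields \eqref{eq:NatBoundCond}.

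There is no real obstacle here: the hypotheses (H1)--(H4) are exactly what is needed to justify the two integration-by-parts steps and to guarantee the continuity required by the fundamental lemma; the only point demanding care is bookkeeping of the boundary contributions, in particular remembering that the $K_{P_3}^\gamma$-term yields no boundary piece (only the $B$-op invokes Theorem~\ref{thm:gfip} with its $\left.\cdot\right|_a^b$ term), so exactly two boundary contributions at $t=a$ combine to produce the natural condition \eqref{eq:NatBoundCond}.
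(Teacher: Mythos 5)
Your proposal is correct and follows essentially the same route as the paper: derive the first variation with $\eta(b)=0$ but $\eta(a)$ free, first specialize to $\eta(a)=0$ to obtain the Euler--Lagrange equation \eqref{eq:eqELCaputo} via the fundamental lemma, then reinstate arbitrary $\eta(a)$ and use \eqref{eq:eqELCaputo} to kill the integral, leaving the boundary bracket whose vanishing is \eqref{eq:NatBoundCond}. Your bookkeeping of the boundary contributions (only the classical term and the $B$-op term contribute, the $K$-op term does not) matches the paper's proof exactly.
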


\begin{proof}
Under the boundary conditions \eqref{eq:Free1},
we do not require $\eta$ in the proof of Theorem~\ref{theorem:ELCaputo}
to vanish at $t=a$. Therefore, following the proof
of Theorem~\ref{theorem:ELCaputo}, we obtain
\begin{equation}
\label{eq:8}
\begin{split}
\partial_3 F&\left\{y\right\}_{P_2, P_3}^{\beta,\gamma}(a)k_\alpha(b,a)\eta(a)
+\eta(a)K_{P_2^*}^{1-\beta}\left[\tau \mapsto
\partial_4 F\left\{y\right\}_{P_2, P_3}^{\beta,\gamma}(\tau)k_\alpha(b,\tau)\right](a)\\
&+\int_a^b\eta(t)\Biggl(\partial_2 F \left\{y\right\}_{P_2, P_3}^{\beta,\gamma}(t)k_\alpha(b,t)
-\frac{d}{dt}\left(\partial_3 F\left\{y\right\}_{P_2, P_3}^{\beta,\gamma}(t)k_\alpha(b,t)\right)\\
&-A_{P_2^*}^\beta\left[\tau \mapsto \partial_4 F\left\{y\right\}_{P_2, P_3}^{\beta,\gamma}(\tau)k_\alpha(b,\tau)\right](t)
+K_{P_3^*}^\gamma\left[\tau \mapsto
\partial_5 F\left\{y\right\}_{P_2, P_3}^{\beta,\gamma}(\tau)k_\alpha(b,\tau)\right](t)\Biggr)dt=0
\end{split}
\end{equation}
for every admissible $\eta\in C^1([a,b];\mathbb{R})$ with $\eta(b)=0$.
In particular, condition \eqref{eq:8} holds for those $\eta$ that fulfill $\eta(a)=0$.
Hence, by the fundamental lemma of the calculus of variations, equation \eqref{eq:eqELCaputo}
is satisfied. Now, let us return to \eqref{eq:8} and let $\eta$ again be arbitrary at point $t=a$.
Inserting \eqref{eq:eqELCaputo}, we obtain the natural boundary condition \eqref{eq:NatBoundCond}.
\end{proof}

\begin{corollary}
Let $\mathcal{J}$ be the functional given by
\begin{equation*}
\mathcal{J}(y)={_{a}}\textsl{I}_b^\alpha\left[t \mapsto
F\left(t,y(t),{_{a}^{C}}\textsl{D}_t^\beta[y](t)\right)\right](b).
\end{equation*}
Let $y$ be a minimizer of $\mathcal{J}$ satisfying the boundary
condition $y(b)=y_b$. Then, $y$ satisfies the Euler--Lagrange equation
\begin{equation}
\label{eq:10}
(b-t)^{\alpha-1}\partial_2 F\left(t,y(t),{_{a}^{C}}\textsl{D}_t^\alpha [y](t)\right)
+{_{t}}\textsl{D}_b^\alpha\left[\tau \mapsto (b-\tau)^{\alpha-1}
\partial_3 F\left(\tau,y(\tau),{_{a}^{C}}\textsl{D}_\tau^\beta y(\tau)\right)\right](t)=0
\end{equation}
and the natural boundary condition
\begin{equation}
\label{eq:11}
{_{a}}\textsl{I}_b^{1-\beta}\left[\tau \mapsto (b-\tau)^{\alpha-1}
\partial_3 F\left(\tau,y(\tau),{_{a}^{C}}\textsl{D}_\tau^\beta y(\tau)\right)\right](a)=0.
\end{equation}
\end{corollary}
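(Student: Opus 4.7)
The plan is to deduce both equations \eqref{eq:10} and \eqref{eq:11} as direct specializations of Theorem~\ref{theorem:NatBound}. First I identify the parameters that reduce the general functional \eqref{eq:1} to the present one: take $P_1=\langle a,b,b,1,0\rangle$ together with $k_\alpha(x,t)=\frac{1}{\Gamma(\alpha)}(x-t)^{\alpha-1}$, which by the remark after Definition~\ref{def:GRL} makes $K_{P_1}^\alpha$ equal to the left Riemann--Liouville integral ${_a}I_b^\alpha$; and take $P_2=\langle a,t,b,1,0\rangle$ with $h_{1-\beta}(t,\tau)=\frac{1}{\Gamma(1-\beta)}(t-\tau)^{-\beta}$, so that $B_{P_2}^\beta = {_a^C}D_t^\beta$. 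Since the Lagrangian here depends only on $(t,y,{_a^C}D_t^\beta[y])$, the partials $\partial_3 F$ (with respect to $y'$) and $\partial_5 F$ (with respect to the $K$-op slot) of the general five-argument Lagrangian vanish, while the corollary's $\partial_3 F$ plays the role of the general $\partial_4 F$.

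Next I read off the dual parameter set $P_2^*=\langle a,t,b,0,1\rangle$ and the corresponding dual operators. By the remark following Definition~\ref{def:GC} applied with this $P_2^*$, one has $-A_{P_2^*}^\beta[\cdot]={_t}D_b^\beta[\cdot]$ and $K_{P_2^*}^{1-\beta}[\cdot]={_t}I_b^{1-\beta}[\cdot]$. Substituting these identifications together with $k_\alpha(b,t)=\frac{(b-t)^{\alpha-1}}{\Gamma(\alpha)}$ into the generalized Euler--Lagrange equation \eqref{eq:eqELCaputo}, discarding the terms in $\partial_3 F$, $\partial_5 F$ that vanish, and clearing the common factor $1/\Gamma(\alpha)$ produces exactly \eqref{eq:10}.

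For the natural boundary condition I substitute the same data into \eqref{eq:NatBoundCond}. The first summand there involves the general-case $\partial_3 F$ and so vanishes. What remains is $K_{P_2^*}^{1-\beta}\bigl[\tau\mapsto k_\alpha(b,\tau)\,\partial_3 F(\tau,y(\tau),{_a^C}D_\tau^\beta[y](\tau))\bigr](a)$, which under the identification $K_{P_2^*}^{1-\beta}={_t}I_b^{1-\beta}$ evaluated at $t=a$, after clearing the factor $1/\Gamma(\alpha)$, is exactly \eqref{eq:11}. The only real work is the bookkeeping of partial-derivative indices (aligning the five-argument general $F$ with the three-argument $F$ of the corollary) and checking that hypotheses (H1)--(H4) of Theorem~\ref{theorem:NatBound} are satisfied; but the power-law kernels chosen here are precisely those covered by the remarks after Definitions~\ref{def:GRL} and \ref{def:GC}, so no substantive analytic obstacle arises.
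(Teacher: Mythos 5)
Your proof is correct and follows essentially the same route as the paper: specialize Theorem~\ref{theorem:NatBound} with $P_1=\langle a,b,b,1,0\rangle$, $P_2=\langle a,t,b,1,0\rangle$ and the power-law kernels, drop the vanishing $y'$ and $K$-op slots, and read \eqref{eq:10} and \eqref{eq:11} off from \eqref{eq:eqELCaputo} and \eqref{eq:NatBoundCond}. Your version is in fact more explicit than the paper's (which omits the identification $P_2^*=\langle a,t,b,0,1\rangle$, $-A_{P_2^*}^\beta={_t}D_b^\beta$, $K_{P_2^*}^{1-\beta}={_t}I_b^{1-\beta}$ and the index bookkeeping), and your consistent use of $\beta$ quietly corrects what appear to be typographical $\alpha$'s in the corollary's displayed equations.
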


\begin{proof}
Let functional \eqref{eq:1} be such that
it does not depend on the classical (integer) derivative
$y'(t)$ and on the $K$-op. If $P_2=<a,t,b,1,0>$,
$h_{1-\beta}(t-\tau)=\frac{1}{\Gamma(1-\beta)}(t-\tau)^{-\beta}$,
and $k_{\alpha}(x-t)=\frac{1}{\Gamma(\alpha)}(x-t)^{\alpha-1}$,
then the $B$-op reduces to the left fractional Caputo derivative and
we deduce \eqref{eq:10} and \eqref{eq:11}
from \eqref{eq:eqELCaputo} and \eqref{eq:NatBoundCond}, respectively.
\end{proof}

\begin{corollary}
Let $\mathcal{J}$ be the functional given by
\begin{equation*}
\mathcal{J}(y)=\int\limits_a^b
F\left(t,y(t),y'(t),B_{P_2}^\beta[y](t),K_{P_3}^\gamma[y](t)\right)dt.
\end{equation*}
If $y$ is a minimizer to $\mathcal{J}$ satisfying the boundary
condition $y(b)=y_b$, then $y$ satisfies the Euler--Lagrange equation
\begin{equation}
\label{eq:eqELCaputoCor}
\partial_2 F \left\{y\right\}_{P_2, P_3}^{\beta,\gamma}(t)
-\frac{d}{dt}\partial_3 F\left\{y\right\}_{P_2, P_3}^{\beta,\gamma}(t)
-A_{P_2^*}^\beta\left[\partial_4 F\left\{y\right\}_{P_2, P_3}^{\beta,\gamma}\right](t)
+K_{P_3^*}^\gamma\left[\partial_5 F\left\{y\right\}_{P_2, P_3}^{\beta,\gamma}\right](t)=0
\end{equation}
and the natural boundary condition
\begin{equation}
\label{eq:NatBoundCond2}
\partial_3 F\left\{y\right\}_{P_2, P_3}^{\beta,\gamma}(a)
+K_{P_2^*}^{1-\beta}\left[\partial_4
F\left\{y\right\}_{P_2, P_3}^{\beta,\gamma}\right](a)=0.
\end{equation}
\end{corollary}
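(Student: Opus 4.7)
The plan is to recognize this corollary as the specialization of Theorem~\ref{theorem:NatBound} obtained when the outer generalized integral $K_{P_1}^\alpha$ degenerates to the ordinary Riemann integral. Concretely, I would take $P_1 = \langle a,b,b,1,0 \rangle$ together with the trivial kernel $k_\alpha(x,t) \equiv 1$, so that
$$
K_{P_1}^\alpha\left[F\{y\}_{P_2,P_3}^{\beta,\gamma}\right](b)
=\int_a^b k_\alpha(b,t)\,F\{y\}_{P_2,P_3}^{\beta,\gamma}(t)\,dt
=\int_a^b F\{y\}_{P_2,P_3}^{\beta,\gamma}(t)\,dt,
$$
which is exactly the functional $\mathcal{J}$ in the statement. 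Hence the problem is a particular case of \eqref{eq:1} subject to \eqref{eq:Free1}.

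Next I would substitute $k_\alpha(b,t) \equiv 1$ into the conclusions of Theorem~\ref{theorem:NatBound}. In the Euler--Lagrange equation \eqref{eq:eqELCaputo} this trivialises the multiplicative weight throughout: the term $\frac{d}{dt}\bigl(\partial_3F\{y\}_{P_2,P_3}^{\beta,\gamma}(t)\,k_\alpha(b,t)\bigr)$ collapses to $\frac{d}{dt}\partial_3F\{y\}_{P_2,P_3}^{\beta,\gamma}(t)$, and the integrands inside $A_{P_2^*}^\beta[\cdot]$ and $K_{P_3^*}^\gamma[\cdot]$ lose their weighting, producing \eqref{eq:eqELCaputoCor}. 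The same substitution turns \eqref{eq:NatBoundCond} into \eqref{eq:NatBoundCond2}.

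The only checks to record are that hypotheses (H1)--(H4) of Theorem~\ref{theorem:NatBound} hold for this choice. The constant kernel is square-integrable on $\Delta = [a,b]\times[a,b]$ and is a difference kernel in $L_1([a,b])$, so Theorems~\ref{thm:gfip:Kop} and \ref{thm:IPL1} apply, and hence Theorem~\ref{thm:gfip} too, covering (H4). Conditions (H2)--(H3) reduce with $k_\alpha \equiv 1$ to the continuity of the four maps $\partial_2 F\{y\}(\cdot)$, $\frac{d}{dt}\partial_3 F\{y\}(\cdot)$, $A_{P_2^*}^\beta[\partial_4 F\{y\}](\cdot)$ and $K_{P_3^*}^\gamma[\partial_5 F\{y\}](\cdot)$, together with $\partial_3 F\{y\} \in AC([a,b])$ and $K_{P_2^*}^{1-\beta}[\partial_4 F\{y\}] \in AC([a,b])$; these are the tacit regularity assumptions under which the result is formulated.

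There is no genuine obstacle: once the reduction $k_\alpha \equiv 1$ is made, everything is a direct transcription. The only mildly delicate point is verifying that the implicit smoothness built into the statement indeed yields (H2)--(H3) after the specialization; but since each derivative operator acting on $F\{y\}$ only loses the benign factor $k_\alpha(b,\cdot) = 1$, the regularity transfers without modification.
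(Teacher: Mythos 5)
Your proposal is correct and follows exactly the paper's own route: the authors likewise set $k_\alpha(x,t)\equiv 1$ in the free-boundary problem \eqref{eq:1} with \eqref{eq:Free1} and read off \eqref{eq:eqELCaputoCor} and \eqref{eq:NatBoundCond2} from \eqref{eq:eqELCaputo} and \eqref{eq:NatBoundCond}. Your additional remarks verifying (H1)--(H4) for the constant kernel are a harmless elaboration the paper leaves implicit.
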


\begin{proof}
Choose, in the problem defined by \eqref{eq:1} and \eqref{eq:Free1}, $k_\alpha(x,t) \equiv 1$.
Then, equations \eqref{eq:eqELCaputoCor} and \eqref{eq:NatBoundCond2} follow
from \eqref{eq:eqELCaputo} and \eqref{eq:NatBoundCond}, respectively.
\end{proof}


\section{Generalized isoperimetric problems}
\label{sec:fp:iso}

Let $\xi\in\mathbb{R}$. Among all functions
$y:[a,b]\rightarrow\mathbb{R}$ satisfying boundary conditions
\begin{equation}
\label{eq:IsoBound}
y(a)=y_a, \quad y(b)=y_b,
\end{equation}
and an isoperimetric constraint of the form
\begin{equation}
\label{eq:IsoConstr}
\mathcal{I}\left(y\right)=K_{P_1}^\alpha\left[G\left\{y\right\}_{P_2, P_3}^{\beta,\gamma}\right](b)=\xi,
\end{equation}
we look for the one that extremizes (\textrm{i.e.}, minimizes or maximizes) a functional
\begin{equation}
\label{eq:IsoFunct}
\mathcal{J}\left(y\right)
=K_{P_1}^\alpha\left[F\left\{y\right\}_{P_2, P_3}^{\beta,\gamma}\right](b).
\end{equation}
Operators $K_{P_1}^\alpha$, $B_{P_2}^\beta$ and $K_{P_3}^\gamma$,
as well as function $F$, are the same as in problem \eqref{eq:1}--\eqref{eq:2}.
Moreover, we assume that functional \eqref{eq:IsoConstr} satisfies hypotheses (H1)--(H4).

\begin{definition}
A function $y : [a,b]\to\mathbb R$
is said to be \emph{admissible} for problem
\eqref{eq:IsoBound}--\eqref{eq:IsoFunct} if functions $B_{P_2}^\beta[y]$
and $K_{P_3}^\gamma[y]$ exist and are continuous on $[a,b]$,
and $y$ satisfies the given boundary conditions \eqref{eq:IsoBound}
and the given isoperimetric constraint \eqref{eq:IsoConstr}.
\end{definition}

\begin{definition}
An admissible function $y\in C^1\left([a,b],\mathbb{R}\right)$ is said to be an \emph{extremal}
for $\mathcal{I}$ if it satisfies the Euler--Lagrange equation
\eqref{eq:eqELCaputo} associated with functional in \eqref{eq:IsoConstr}, \textrm{i.e.},
\begin{multline*}
k_\alpha(b,t)\partial_2 G \left\{y\right\}_{P_2, P_3}^{\beta,\gamma}(t)
-\frac{d}{dt}\left(\partial_3G\left\{y\right\}_{P_2, P_3}^{\beta,\gamma}(t)k_\alpha(b,t)\right)\\
-A_{P_2^*}^\beta\left[\tau \mapsto k_\alpha(b,\tau)\partial_4 G\left\{y\right\}_{P_2, P_3}^{\beta,\gamma}(\tau)\right](t)
+K_{P_3^*}^\gamma\left[\tau \mapsto k_\alpha(b,\tau)\partial_5 G\left\{y\right\}_{P_2, P_3}^{\beta,\gamma}(\tau)\right](t)=0,
\end{multline*}
where $P_j^*=<a,t,b,q_j,p_j>$, $j=2,3$, and $t\in(a,b)$.
\end{definition}

\begin{theorem}
\label{theorem:EL2}
If $y$ is a solution to the isoperimetric problem
\eqref{eq:IsoBound}--\eqref{eq:IsoFunct} and
is not an extremal for $\mathcal{I}$, then
there exists a real constant $\lambda$ such that
\begin{multline}
\label{eq:eqEL2}
k_\alpha(b,t)\partial_2 H \left\{y\right\}_{P_2, P_3}^{\beta,\gamma}(t)
-\frac{d}{dt}\left(\partial_3 H\left\{y\right\}_{P_2, P_3}^{\beta,\gamma}(t)k_\alpha(b,t)\right)\\
-A_{P_2^*}^\beta\left[\tau \mapsto k_\alpha(b,\tau)\partial_4 H\left\{y\right\}_{P_2, P_3}^{\beta,\gamma}(\tau)\right](t)
+K_{P_3^*}^\gamma\left[\tau \mapsto k_\alpha(b,\tau)\partial_5 H\left\{y\right\}_{P_2, P_3}^{\beta,\gamma}(\tau)\right](t)=0
\end{multline}
for all $t\in(a,b)$, where $H(t,y,u,v,w)=F(t,y,u,v,w)-\lambda G(t,y,u,v,w)$ and $P_j^*=<a,t,b,q_j,p_j>$.
\end{theorem}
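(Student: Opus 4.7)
The plan is to follow the classical Lagrange multiplier strategy for isoperimetric problems, adapted to the generalized fractional setting, leveraging the integration by parts machinery (Theorems~\ref{thm:gfip:Kop}, \ref{thm:IPL1}, \ref{thm:gfip}) already developed and reused in the proof of Theorem~\ref{theorem:ELCaputo}. The non-extremality hypothesis on $y$ is what lets us reduce the constrained problem to an unconstrained one involving $H=F-\lambda G$.

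First, I would consider a two-parameter family of admissible variations $\hat{y}=y+\varepsilon_1\eta_1+\varepsilon_2\eta_2$, where $\eta_1,\eta_2\in C^1([a,b];\mathbb{R})$ are arbitrary functions with continuous $B_{P_2}^\beta$-opn and $K_{P_3}^\gamma$-opn, and both vanish at the endpoints $a$ and $b$ (to preserve \eqref{eq:IsoBound}). Set $\hat{\mathcal{J}}(\varepsilon_1,\varepsilon_2):=\mathcal{J}(\hat y)$ and $\hat{\mathcal{I}}(\varepsilon_1,\varepsilon_2):=\mathcal{I}(\hat y)$. Differentiating under the integral sign (justified by (H1)--(H2)) and applying, in exactly the same way as in the proof of Theorem~\ref{theorem:ELCaputo}, the classical integration by parts together with Theorems~\ref{thm:gfip:Kop} and \ref{thm:gfip}, one obtains
\begin{equation*}
\left.\frac{\partial \hat{\mathcal{I}}}{\partial \varepsilon_2}\right|_{(0,0)}
=\int_a^b \eta_2(t)\,\Phi_G(t)\,dt,
\end{equation*}
where $\Phi_G(t)$ is the left-hand side of the Euler--Lagrange expression \eqref{eq:eqELCaputo} associated with $G$. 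Since by assumption $y$ is not an extremal for $\mathcal{I}$, the function $\Phi_G$ does not vanish identically on $(a,b)$; hence there exists a choice of $\eta_2$ (vanishing at the endpoints) such that $\left.\partial \hat{\mathcal{I}}/\partial \varepsilon_2\right|_{(0,0)}\neq 0$.

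Fixing such $\eta_2$, the implicit function theorem applied to $\hat{\mathcal{I}}(\varepsilon_1,\varepsilon_2)-\xi=0$ at $(0,0)$ yields a $C^1$ function $\varepsilon_2=\varepsilon_2(\varepsilon_1)$ with $\varepsilon_2(0)=0$ on a neighbourhood of $0$, along which the isoperimetric constraint is satisfied. The function $\varepsilon_1\mapsto \hat{\mathcal{J}}(\varepsilon_1,\varepsilon_2(\varepsilon_1))$ has therefore an extremum at $\varepsilon_1=0$, and the standard Lagrange multiplier argument produces a real $\lambda$ such that
\begin{equation*}
\nabla \hat{\mathcal{J}}(0,0)=\lambda\,\nabla \hat{\mathcal{I}}(0,0).
\end{equation*}
Setting $H=F-\lambda G$ and using linearity of $\partial_i$, $K$-op, $B$-op, this equation reads
\begin{equation*}
\int_a^b \eta_1(t)\,\Phi_H(t)\,dt=0
\end{equation*}
for every admissible $\eta_1$ vanishing at $a$ and $b$, where $\Phi_H$ is the Euler--Lagrange expression \eqref{eq:eqELCaputo} associated with $H$.

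Finally, applying the fundamental lemma of the calculus of variations (as in the end of the proof of Theorem~\ref{theorem:ELCaputo}) yields $\Phi_H(t)=0$ for all $t\in(a,b)$, which is exactly \eqref{eq:eqEL2}. The main obstacle is really only bookkeeping: one must verify that the integration by parts performed with $H=F-\lambda G$ still operates under hypotheses (H1)--(H4) applied separately to $F$ and $G$, so that the termination in a pointwise Euler--Lagrange equation is legitimate; this is immediate from linearity of the operators involved and from the assumption that $G$ satisfies (H1)--(H4).
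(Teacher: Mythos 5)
Your proposal is correct and follows essentially the same route as the paper's own proof: a two-parameter family of variations, the implicit function theorem applied via the non-extremality of $y$ for $\mathcal{I}$, the Lagrange multiplier rule, and the fundamental lemma of the calculus of variations to pass from the integral identity to the pointwise Euler--Lagrange equation for $H=F-\lambda G$. No gaps; the bookkeeping remark about hypotheses (H1)--(H4) applying to $G$ matches the paper's standing assumption on the constraint functional.
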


\begin{proof}
Consider a two-parameter family of the form
$\hat{y}=y+\varepsilon_1\eta_1+\varepsilon_2\eta_2$,
where for each $i\in\{1,2\}$ we have $\eta_i(a)=\eta_i(b)=0$.
First we show that we can select $\varepsilon_2\eta_2$ such that
$\hat{y}$ satisfies \eqref{eq:IsoConstr}. Consider the quantity
$\mathcal{I}(\hat{y})=K_{P_1}^\alpha\left[G\left\{\hat{y}\right\}_{P_2,
P_3}^{\beta,\gamma}\right](b)$.
Looking to $\mathcal{I}(\hat{y})$ as a function
of $\varepsilon_1,\varepsilon_2$, we define
$\hat{I}(\varepsilon_1,\varepsilon_2)=\mathcal{I}(\hat{y})-\xi$.
Thus, $\hat{I}(0,0)=0$.
On the other hand, applying integration by parts formulas
(Theorems~\ref{thm:gfip:Kop}, \ref{thm:IPL1} and \ref{thm:gfip}),
we obtain that
\begin{multline*}
\left.\frac{\partial\hat{I}}{\partial\varepsilon_2}\right|_{(0,0)}
=\int\limits_a^b\eta_2(t)\Biggl(k_\alpha(b,t)
\partial_2 G \left\{y\right\}_{P_2, P_3}^{\beta,\gamma}(t)
-\frac{d}{dt}\left(\partial_3G\left\{y\right\}_{P_2, P_3}^{\beta,\gamma}(t)
k_\alpha(b,t)\right)\\
-A_{P_2^*}^\beta\left[\tau \mapsto k_\alpha(b,\tau)
\partial_4 G\left\{y\right\}_{P_2, P_3}^{\beta,\gamma}(\tau)\right](t)
+K_{P_3^*}^\gamma\left[\tau \mapsto k_\alpha(b,\tau)
\partial_5 G\left\{y\right\}_{P_2, P_3}^{\beta,\gamma}(\tau)\right](t)\Biggr)dt,
\end{multline*}
where $P_j^*=<a,t,b,q_j,p_j>$, $j=1,2$.
We assume that $y$ is not an extremal for $\mathcal{I}$.
Hence, the fundamental lemma of the calculus of variations implies
that there exists a function $\eta_2$ such that
$\left.\frac{\partial\hat{I}}{\partial\varepsilon_2}\right|_{(0,0)}\neq 0$.
According to the implicit function theorem, there exists
a function $\varepsilon_2(\cdot)$ defined in a neighborhood of $0$ such that
$\hat{I}(\varepsilon_1,\varepsilon_2(\varepsilon_1))=0$.
Let $\hat{J}(\varepsilon_1,\varepsilon_2)=\mathcal{J}(\hat{y})$.
Function $\hat{J}$ has an extremum at $(0,0)$ subject to $\hat{I}(0,0)=0$,
and we have proved that $\nabla\hat{I}(0,0)\neq 0$. The Lagrange multiplier rule
asserts that there exists a real number $\lambda$ such that
$\nabla(\hat{J}(0,0)-\lambda\hat{I}(0,0))=0$.
Because
\begin{multline*}
\left.\frac{\partial\hat{J}}{\partial\varepsilon_1}\right|_{(0,0)}
=\int\limits_a^b\Biggl(k_\alpha(b,t)
\partial_2 F \left\{y\right\}_{P_2, P_3}^{\beta,\gamma}(t)
-\frac{d}{dt}\left(\partial_3 F\left\{y\right\}_{P_2,
P_3}^{\beta,\gamma}(t)k_\alpha(b,t)\right)\\
-A_{P_2^*}^\beta\left[\tau \mapsto k_\alpha(b,\tau)
\partial_4 F\left\{y\right\}_{P_2, P_3}^{\beta,\gamma}(\tau)\right](t)
+K_{P_3^*}^\gamma\left[\tau \mapsto k_\alpha(b,\tau)
\partial_5 F\left\{y\right\}_{P_2, P_3}^{\beta,
\gamma}(\tau)\right](t)\Biggr) \eta_1(t)dt
\end{multline*}
and
\begin{multline*}
\left.\frac{\partial\hat{I}}{\partial\varepsilon_1}\right|_{(0,0)}
=\int\limits_a^b\Biggl(k_\alpha(b,t)\partial_2
G \left\{y\right\}_{P_2, P_3}^{\beta,\gamma}(t)
-\frac{d}{dt}\left(\partial_3 G\left\{y\right\}_{P_2,
P_3}^{\beta,\gamma}(t)k_\alpha(b,t)\right)\\
-A_{P_2^*}^\beta\left[\tau \mapsto k_\alpha(b,\tau)\partial_4
G\left\{y\right\}_{P_2, P_3}^{\beta,\gamma}(\tau)\right](t)
+K_{P_3^*}^\gamma\left[\tau \mapsto k_\alpha(b,\tau)\partial_5
G\left\{y\right\}_{P_2, P_3}^{\beta,\gamma}(\tau)\right](t)\Biggr)\eta_1(t) dt,
\end{multline*}
one has
\begin{equation*}
\begin{split}
\int\limits_a^b & \Biggl\{k_\alpha(b,t)\partial_2
F \left\{y\right\}_{P_2, P_3}^{\beta,\gamma}(t)
-\frac{d}{dt}\left(\partial_3 F\left\{y\right\}_{P_2,
P_3}^{\beta,\gamma}(t)k_\alpha(b,t)\right)\\
&-A_{P_2^*}^\beta\left[\tau \mapsto k_\alpha(b,\tau)\partial_4
F\left\{y\right\}_{P_2, P_3}^{\beta,\gamma}(\tau)\right](t)
+K_{P_3^*}^\gamma\left[\tau \mapsto k_\alpha(b,\tau)\partial_5
F\left\{y\right\}_{P_2, P_3}^{\beta,\gamma}(\tau)\right](t)\\
&-\lambda\biggl(k_\alpha(b,t)\partial_2
G \left\{y\right\}_{P_2, P_3}^{\beta,\gamma}(t)
-\frac{d}{dt}\left(\partial_3 G\left\{y\right\}_{P_2,
P_3}^{\beta,\gamma}(t) k_\alpha(b,t)\right)\\
&-A_{P_2^*}^\beta\left[\tau \mapsto k_\alpha(b,\tau)\partial_4
G\left\{y\right\}_{P_2, P_3}^{\beta,\gamma}(\tau)\right](t)
+K_{P_3^*}^\gamma\left[\tau \mapsto k_\alpha(b,\tau)\partial_5
G\left\{y\right\}_{P_2, P_3}^{\beta,
\gamma}(\tau)\right](t)\biggr)\Biggr\}\eta_1 (t) dt=0.
\end{split}
\end{equation*}
From the fundamental lemma of the calculus of variations
(see, \textrm{e.g.}, \cite[Section~2.2]{G:H}) it follows
\begin{equation*}
\begin{split}
k_\alpha(b,t)&\partial_2 F\left\{y\right\}_{P_2, P_3}^{\beta,\gamma}(t)
-\frac{d}{dt}\left(\partial_3 F\left\{y\right\}_{P_2,
P_3}^{\beta,\gamma}(t)k_\alpha(b,t)\right)\\
&-A_{P_2^*}^\beta\left[\tau \mapsto k_\alpha(b,\tau)\partial_4
F\left\{y\right\}_{P_2, P_3}^{\beta,\gamma}(\tau)\right](t)
+K_{P_3^*}^\gamma\left[\tau \mapsto k_\alpha(b,\tau)\partial_5
F\left\{y\right\}_{P_2, P_3}^{\beta,\gamma}(\tau)\right](t)\\
&-\lambda\Biggl(k_\alpha(b,t)\partial_2 G
\left\{y\right\}_{P_2, P_3}^{\beta,\gamma}(t)
-\frac{d}{dt}\left(\partial_3 G\left\{y\right\}_{P_2,
P_3}^{\beta,\gamma}(t)k_\alpha(b,t)\right)\\
&-A_{P_2^*}^\beta\left[\tau \mapsto k_\alpha(b,\tau)\partial_4
G\left\{y\right\}_{P_2, P_3}^{\beta,\gamma}(\tau)\right](t)
+K_{P_3^*}^\gamma\left[\tau \mapsto k_\alpha(b,\tau)\partial_5
G\left\{y\right\}_{P_2, P_3}^{\beta,\gamma}(\tau)\right](t)\Biggr)=0,
\end{split}
\end{equation*}
that is,
\begin{multline*}
k_\alpha(b,t)\partial_2 H \left\{y\right\}_{P_2, P_3}^{\beta,\gamma}(t)
-\frac{d}{dt}\left(\partial_3 H\left\{y\right\}_{P_2,
P_3}^{\beta,\gamma}(t)k_\alpha(b,t)\right)\\
-A_{P_2^*}^\beta\left[\tau \mapsto k_\alpha(b,\tau)\partial_4
H\left\{y\right\}_{P_2, P_3}^{\beta,\gamma}(\tau)\right](t)
+K_{P_3^*}^\gamma\left[\tau \mapsto k_\alpha(b,\tau)\partial_5
H\left\{y\right\}_{P_2, P_3}^{\beta,\gamma}(\tau)\right](t)=0
\end{multline*}
with $H=F-\lambda G$.
\end{proof}

\begin{corollary}
Let $y$ be a minimizer to the isoperimetric problem
\begin{gather}
\mathcal{J}(y)={_{a}}\textsl{I}_b^\alpha\left[t \mapsto
F\left(t,y(t),{_{a}^{C}}\textsl{D}_t^\beta[y](t)\right)
\right](b) \longrightarrow \min ,\label{eq:21}\\
\mathcal{I}(y)={_{a}}\textsl{I}_b^\alpha\left[t \mapsto G\left(t,y(t),
{_{a}^{C}}\textsl{D}_t^\beta [y](t)\right)\right](b)=\xi,\label{eq:22}\\
y(a)=y_a,\quad y(b)=y_b.\label{eq:23}
\end{gather}
If $y$ is not an extremal of $\mathcal{I}$, then there exists
a constant $\lambda$ such that $y$ satisfies
\begin{equation}
\label{eq:20}
(b-t)^{\alpha-1}\partial_2 H\left(t,y(t),
{_{a}^{C}}\textsl{D}_t^\alpha[y](t)\right)
+{_{t}}\textsl{D}_b^\beta\left[\tau \mapsto (b-\tau)^{\alpha-1}\partial_3
H\left(\tau,y(\tau),{_{a}^{C}}\textsl{D}_\tau^\beta[y](\tau)\right)\right](t)=0
\end{equation}
for all $t\in(a,b)$, where $H(t,y,v)=F(t,y,v)-\lambda G(t,y,v)$.
\end{corollary}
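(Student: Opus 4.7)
The plan is to derive the corollary as a direct specialization of Theorem~\ref{theorem:EL2}, by choosing the kernels and parameter sets so that the generalized operators $K_{P_1}^\alpha$, $B_{P_2}^\beta$ collapse onto the standard left Riemann--Liouville fractional integral ${_a}I_b^\alpha$ and the left Caputo fractional derivative ${_a^C D}_t^\beta$, respectively. Concretely, I would take
\[
k_\alpha(x,t)=\frac{1}{\Gamma(\alpha)}(x-t)^{\alpha-1},
\qquad h_{1-\beta}(t,\tau)=\frac{1}{\Gamma(1-\beta)}(t-\tau)^{-\beta},
\]
with $P_1=\langle a,b,b,1,0\rangle$ and $P_2=\langle a,t,b,1,0\rangle$. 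Since the Lagrangians $F$ and $G$ in \eqref{eq:21}--\eqref{eq:22} depend neither on $y'(t)$ nor on any $K$-op value, one has $\partial_3F=\partial_5F=0$ and $\partial_3G=\partial_5G=0$, so the same holds for $H=F-\lambda G$. Consequently, in the master equation \eqref{eq:eqEL2} only the $\partial_2H$ term and the $A_{P_2^*}^\beta$ term survive, which is exactly the skeleton of the target equation \eqref{eq:20}.

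Next I would translate the remaining operator back into classical notation. With $P_2^*=\langle a,t,b,0,1\rangle$ and the chosen difference kernel, the remark following Definition~\ref{def:GC} gives $A_{P_2^*}^\beta[\,\cdot\,]=-\,{_tD_b^\beta}[\,\cdot\,]$, so the term $-A_{P_2^*}^\beta[\cdots]$ in \eqref{eq:eqEL2} becomes $+\,{_tD_b^\beta}[\cdots]$, matching the sign in \eqref{eq:20}. Substituting $k_\alpha(b,t)=\frac{1}{\Gamma(\alpha)}(b-t)^{\alpha-1}$ into both surviving terms and cancelling the common nonzero constant $1/\Gamma(\alpha)$ then yields \eqref{eq:20} verbatim, once one identifies the arguments: what the corollary calls $\partial_2H$ and $\partial_3H$ are precisely $\partial_2H$ and $\partial_4H$ in the five-argument convention of Theorem~\ref{theorem:EL2}, since the third slot $u$ (classical derivative) is absent here and the fourth slot $v$ (Caputo derivative) is the last remaining argument of $H$.

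The only nontrivial prerequisite is checking that the hypothesis of Theorem~\ref{theorem:EL2}, namely that $y$ is \emph{not} an extremal of $\mathcal{I}$, is transferred unchanged (which it is, since extremality is defined with respect to the associated Euler--Lagrange equation for $\mathcal{I}$, which under our choices is itself the specialization of the extremality equation in the theorem). The main technical obstacle, insofar as there is one, is bookkeeping: tracking the argument relabelling of $H$, the sign flip produced by $P_2^*=\langle a,t,b,0,1\rangle$, and the fact that the $1/\Gamma(\alpha)$ factor appears symmetrically in both surviving terms and may be dropped. No new analytical work is required beyond what is already established in Theorem~\ref{theorem:EL2}.
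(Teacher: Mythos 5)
Your proposal is correct and follows essentially the same route as the paper's own proof: both specialize Theorem~\ref{theorem:EL2} with $k_\alpha(x,t)=\frac{1}{\Gamma(\alpha)}(x-t)^{\alpha-1}$, $h_{1-\beta}(t,\tau)=\frac{1}{\Gamma(1-\beta)}(t-\tau)^{-\beta}$, $P_1=\langle a,b,b,1,0\rangle$, $P_2=\langle a,t,b,1,0\rangle$, and kill the terms coming from the absent arguments ($\partial_3 H=\partial_5 H=0$). You merely spell out the bookkeeping (the sign flip from $P_2^*=\langle a,t,b,0,1\rangle$, the argument relabelling, and the cancellation of $1/\Gamma(\alpha)$) that the paper leaves implicit.
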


\begin{proof}
Let $k_{\alpha}(x,t)=\frac{1}{\Gamma(\alpha)}(x-t)^{\alpha-1}$,
$h_{1-\beta}(t,\tau)=\frac{1}{\Gamma(1-\beta)}(t-\tau)^{-\beta}$,
$P_1=<a,b,b,1,0>$ and $P_2=<a,t,b,1,0>$. Then the $K$-op and the $B$-op
reduce to the left fractional integral and the left fractional
Caputo derivative, respectively. Therefore, problem \eqref{eq:21}--\eqref{eq:23}
is a particular case of problem \eqref{eq:IsoBound}--\eqref{eq:IsoFunct},
and \eqref{eq:20} follows from \eqref{eq:eqEL2}
with $\partial_3 H=\partial_5 H=0$.
\end{proof}

\begin{corollary}
\label{IsoPro:RicDel}
Let $y$ be a minimizer to
\begin{gather*}
\mathcal{J}(y)=\int_a^b F\left(t,y(t),y'(t),\,
B_{P_2}^\beta[y](t), K_{P_3}^\gamma[y](t) \right) dt
\longrightarrow \min, \\
\mathcal{I}(y)=\int_a^b G\left(t,y(t),y'(t),\,
B_{P_2}^\beta[y](t), K_{P_3}^\gamma[y](t) \right) dt=\xi,\\
y(a)=y_a,\ y(b)=y_b.
\end{gather*}
If $y$ is not an extremal of $\mathcal{I}$,
then there exists a constant $\lambda$ such that $y$ satisfies
\begin{equation}
\label{eq:CorEL}
\partial_2 H \left\{y\right\}_{P_2, P_3}^{\beta,\gamma}(t)
-\frac{d}{dt}\partial_3 H\left\{y\right\}_{P_2, P_3}^{\beta,\gamma}(t)
-A_{P_2^*}^\beta\left[\partial_4 H\left\{y\right\}_{P_2, P_3}^{\beta,\gamma}\right](t)
+K_{P_3^*}^\gamma\left[\partial_5 H\left\{y\right\}_{P_2, P_3}^{\beta,\gamma}\right](t)=0
\end{equation}
for all $t\in[a,b]$, where $H(t,y,u,v,w)=F(t,y,u,v,w)-\lambda G(t,y,u,v,w)$.
\end{corollary}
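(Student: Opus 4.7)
The plan is to recognize this corollary as the specialization of Theorem~\ref{theorem:EL2} obtained by trivializing the outer operator $K_{P_1}^\alpha$. Concretely, I would observe that in Theorem~\ref{theorem:EL2} the $p$-set is fixed as $P_1=\langle a,b,b,1,0\rangle$, so that $K_{P_1}^\alpha[h](b)=\int_a^b k_\alpha(b,t) h(t)\,dt$. Choosing the constant kernel $k_\alpha(x,t)\equiv 1$ forces $K_{P_1}^\alpha[h](b)=\int_a^b h(t)\,dt$, which is exactly the form of the functionals $\mathcal{J}$ and $\mathcal{I}$ in the corollary. Thus the present isoperimetric problem is the special case of the problem \eqref{eq:IsoBound}--\eqref{eq:IsoFunct} corresponding to this choice of kernel.

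Next I would verify that hypotheses (H1)--(H4) for both $F$ and $G$ are satisfied (or simply absorbed into the regularity implicit in the corollary's statement), so that Theorem~\ref{theorem:EL2} applies. Since $y$ is assumed not to be an extremal of $\mathcal{I}$, the theorem produces a multiplier $\lambda\in\mathbb{R}$ such that $H=F-\lambda G$ satisfies equation \eqref{eq:eqEL2}. Substituting $k_\alpha(b,t)=1$ (and consequently $k_\alpha(b,\tau)=1$ in every occurrence) collapses each $k_\alpha(b,\cdot)$ factor inside the $A$-op and $K$-op brackets, as well as in the first two terms of the equation. What remains is precisely
\begin{equation*}
\partial_2 H\left\{y\right\}_{P_2,P_3}^{\beta,\gamma}(t)
-\frac{d}{dt}\partial_3 H\left\{y\right\}_{P_2,P_3}^{\beta,\gamma}(t)
-A_{P_2^*}^\beta\!\left[\partial_4 H\left\{y\right\}_{P_2,P_3}^{\beta,\gamma}\right](t)
+K_{P_3^*}^\gamma\!\left[\partial_5 H\left\{y\right\}_{P_2,P_3}^{\beta,\gamma}\right](t)=0,
\end{equation*}
which is \eqref{eq:CorEL}.

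There is really no substantive obstacle here: the only thing to be careful about is the bookkeeping, namely checking that the constant kernel $k_\alpha\equiv 1$ is admissible in Theorems~\ref{thm:gfip:Kop}, \ref{thm:IPL1} and \ref{thm:gfip} (it is square-integrable and of difference-kernel type on the bounded square $[a,b]\times[a,b]$), and that with this kernel the auxiliary continuity/absolute-continuity conditions in (H2)--(H3) reduce to the implicit smoothness assumptions on $\partial_i F$ and $\partial_i G$ composed with admissible $y$. Once those are in place, the conclusion is a direct rewriting of \eqref{eq:eqEL2}.
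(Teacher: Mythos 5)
Your proposal is correct and follows exactly the paper's own route: the paper proves this corollary by taking $P_1=\langle a,b,b,1,0\rangle$ and $k_\alpha(x,t)\equiv 1$ in the isoperimetric problem \eqref{eq:IsoBound}--\eqref{eq:IsoFunct}, so that $K_{P_1}^\alpha$ becomes the classical integral and \eqref{eq:CorEL} drops out of \eqref{eq:eqEL2}. Your extra remarks on checking that the constant kernel is admissible in the integration-by-parts theorems and that (H1)--(H4) hold are a welcome bit of diligence the paper leaves implicit, but the argument is the same.
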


\begin{proof}
Let in problem \eqref{eq:IsoBound}--\eqref{eq:IsoFunct}
$P_1=<a,b,b,1,0>$ and kernel $k_{\alpha}(x,t) \equiv 1$. Then,
the generalized fractional integral $K_{P_1}^\alpha$
becomes the classical integral and \eqref{eq:CorEL}
follows from \eqref{eq:eqEL2}.
\end{proof}


\section{Illustrative examples}
\label{sec:ex}

We illustrate our results through two examples
with different kernels: one of a fundamental problem
\eqref{eq:1}--\eqref{eq:2} (Example~\ref{ex:1}),
the other an isoperimetric problem
\eqref{eq:IsoBound}--\eqref{eq:IsoFunct} (Example~\ref{ex:2}).

\begin{example}
\label{ex:1}
Let $\alpha,\beta \in\left(0,1\right)$, $\xi\in\mathbb{R}$,
$P_1=<0,1,1,1,0>$, and $P_2=<0,t,1,1,0>$.
Consider the following problem:
\begin{equation*}
\begin{gathered}
\mathcal{J}(y)=
K_{P_1}^\alpha\left[t \mapsto tK_{P_2}^\beta [y](t)
+\sqrt{1-\left( K_{P_2}^\beta[y](t) \right)^2}\right](1)
\longrightarrow \min,\\
y(0)=1 \, , \ y(1)=\frac{\sqrt{2}}{4}+\int_0^1 r_{\beta}(1-\tau)
\frac{1}{\left(1+\tau^2\right)^\frac{3}{2}}d\tau,
\end{gathered}
\end{equation*}
with kernel $h_{\beta}$ such that $h_\beta(t,\tau)=h_\beta(t-\tau)$ and $h_\beta(0)=1$.
Here the resolvent $r_{\beta}(t)$ is related to the kernel $h_{\beta}(t)$
by $r_{\beta}(t)=\mathcal{L}^{-1}\left[s \mapsto \frac{1}{s\widetilde{h}_{\beta}(s)}-1\right](t)$,
$\widetilde{h}_\beta(s)=\mathcal{L}\left[t \mapsto h_\beta(t)\right](s)$, where $\mathcal{L}$
and $\mathcal{L}^{-1}$ are the direct and the inverse Laplace operators, respectively.
We apply Theorem~\ref{theorem:ELCaputo} with Lagrangian $F$ given by
$F(t,y,u,v,w) =tw+\sqrt{1-w^2}$. Because
\begin{equation*}
y(t) = \frac{1}{\left(1+t^2\right)^\frac{3}{2}}
+\int_0^t r_\beta(t-\tau)\frac{1}{\left(1+\tau^2\right)^\frac{3}{2}}d\tau
\end{equation*}
is the solution to the Volterra integral equation of first kind
(see, \textrm{e.g.}, Equation~16, p.~114 of \cite{book:Polyanin})
\begin{equation*}
\textsl{K}_{P_2}^\beta [y](t)=\frac{t\sqrt{1+t^2}}{1+t^2},
\end{equation*}
it satisfies our generalized Euler--Lagrange equation
\eqref{eq:eqELCaputo}, \textrm{i.e.},
\begin{equation*}
\textsl{K}_{P_2^*}^\beta\left[\tau \mapsto k_\alpha(b,\tau)\left(
\frac{-\textsl{K}_{P_2^*}^\beta[y](\tau)}{\sqrt{1
-\left(\textsl{K}_{P_2^*}^\beta[y](\tau)\right)^2}}
+\tau\right)\right](t)=0.
\end{equation*}
In particular, for the kernel
$h_{\beta}(t-\tau)=\cosh(\beta(t-\tau))$, the boundary conditions are
$y(0)=1$ and $y(1)=1+\beta^2(1-\sqrt{2})$, and the solution is
$y(t)=\frac{1}{(1+t^2)^\frac{3}{2}}+\beta^2\left(1-\sqrt{1+t^2}\right)$
(\textrm{cf.} \cite[p.~22]{book:Polyanin}).
\end{example}

In the next example we make use of the Mittag--Leffler function
of two parameters: if $\alpha, \beta>0$, then
the Mittag--Leffler function is defined by
\begin{equation*}
E_{\alpha,\beta}(z)
=\sum_{k=0}^\infty\frac{z^k}{\Gamma(\alpha k+\beta)}\, .
\end{equation*}
This function appears naturally in the solution
of fractional differential equations,
as a generalization of the exponential function \cite{book:Kilbas}.

\begin{example}
\label{ex:2}
Let $\alpha,\beta\in\left(0,1\right)$, $\xi \in\mathbb{R}$,
and $\xi\notin\left\{\pm\frac{1}{4}\right\}$.
Consider the following problem:
\begin{equation}
\label{eq:ex}
\begin{gathered}
\mathcal{J}(y)={_{0}}\textsl{I}_1^\alpha\left[\sqrt{1+\left(y'
+\, ^{C}_{0}\textsl{D}_t^\beta [y]\right)^2}\right](1) \longrightarrow \min,\\
\mathcal{I}(y)={_{0}}\textsl{I}_1^\alpha\left[\left(y'
+ \, {^{C}_{0}}\textsl{D}_t^\beta [y]\right)^2\right](1) = \xi,\\
y(0)=0 \, , \ y(1)=\int_0^1 E_{1-\beta,1}\left(-(1-\tau)^{1-\beta}\right)
\frac{\sqrt{1-16\xi^2}}{4\xi}d\tau,
\end{gathered}
\end{equation}
which is an example of \eqref{eq:IsoBound}--\eqref{eq:IsoFunct} with
$p$-sets $P_1=<0,1,1,1,0>$ and $P_2=<0,t,1,1,0>$ and
kernels $k_\alpha(x-t)=\frac{1}{\Gamma(\alpha)}(x-t)^{\alpha-1}$
and $h_{1-\beta}(t-\tau)=\frac{1}{\Gamma(1-\beta)}(t-\tau)^{-\beta}$.
Function $H$ of Theorem~\ref{theorem:EL2} is given by
$H(t,y,u,v,w)=\sqrt{1+(u+v)^2} -\lambda (u+v)^2$.
One can easily check (see \cite[p.~324]{book:Kilbas}) that
\begin{equation}
\label{eq:y:ex}
y(t)=\int_0^t E_{1-\beta,1}\left(-(t-\tau)^{1-\beta}\right)
\frac{\sqrt{1-16\xi^2}}{4\xi}d\tau
\end{equation}
\begin{itemize}
\item is not an extremal for $\mathcal{I}$;
\item satisfies $y'+\,^{C}_{0}\textsl{D}_t^\beta [y]= \frac{\sqrt{1-16\xi^2}}{4\xi}$.
\end{itemize}
Moreover, \eqref{eq:y:ex} satisfies \eqref{eq:eqEL2} for $\lambda=2\xi$, \textrm{i.e.},
\begin{multline*}
-\frac{d}{dt}\left((1-t)^{\alpha-1}\left(y'(t)
+\,{^{C}_{0}}\textsl{D}_t^\beta[y](t)\right)\left(\frac{1}{\sqrt{1+\left(y'(t)
+\,{^{C}_{0}}\textsl{D}_t^\beta [y](t)\right)^2}}-4\xi\right)\right)\\
+ \, {_{t}}\textsl{D}_1^\beta\left[\tau \mapsto (1-\tau)^{\alpha-1}\left(y'(\tau)
+\,{^{C}_{0}}\textsl{D}_\tau^\beta [y](\tau)\right)\left(\frac{1}{\sqrt{1+\left(y'(\tau)
+\,{^{C}_{0}}\textsl{D}_\tau^\beta[y](\tau)\right)^2}}-4\xi\right)\right](t)=0
\end{multline*}
for all $t \in (0,1)$. We conclude that \eqref{eq:y:ex} is an extremal for problem \eqref{eq:ex}.
\end{example}


\section{Applications to Physics}
\label{sec:appl:phys}

If the functional \eqref{eq:1} does not depend on $B$-op and $K$-op,
then Theorem~\ref{theorem:ELCaputo} gives the following result:
if $y$ is a solution to the problem of extremizing
\begin{equation}
\label{FALVA:J}
\mathcal{J}(y)=\int_a^b F\left(t,y(t),y'(t)\right) k_\alpha(b,t) dt
\end{equation}
subject to $y(a)=y_a$ and $y(b)=y_b$, where $\alpha\in(0,1)$, then
\begin{equation}
\label{eq:FALVA}
\partial_2 F\left(t,y(t),y'(t)\right)-\frac{d}{dt}\partial_3
F\left(t,y(t),y'(t)\right)=\frac{1}{k_{\alpha}(b,t)}
\cdot \frac{d}{dt}k_{\alpha}(b,t)\partial_3 F\left(t,y(t),y'(t)\right).
\end{equation}
We recognize on the right hand side of \eqref{eq:FALVA}
the generalized weak dissipative parameter
\begin{equation*}
\delta(t)=\frac{1}{k_{\alpha}(b,t)}\cdot \frac{d}{dt}k_{\alpha}(b,t).
\end{equation*}


\subsection{Quantum mechanics of the damped harmonic oscillator}
\label{sub:sec:harosc}

As a first application, let us consider kernel
$k_\alpha(b,t)=\mathrm{e}^{\alpha (b-t)}$ and the Lagrangian
\begin{equation*}
L\left(y,\dot{y}\right)=\frac{1}{2}m \dot{y}^2-V(y),
\end{equation*}
where $V(y)$ is the potential energy and $m$ stands for mass.
The Euler--Lagrange equation \eqref{eq:FALVA} gives the following
second order ordinary differential equation:
\begin{equation}
\label{mod}
\ddot{y}(t)-\alpha \dot{y}(t) = -\frac{1}{m}V'(y(t)).
\end{equation}
Equation \eqref{mod} coincides with (14) of \cite{Herrera},
obtained by modification of Hamilton's principle.


\subsection{Fractional Action-Like Variational Approach (FALVA)}
\label{sub:sec:FALVA}

We now extend some of the recent results of \cite{Nabulsi2,Nabulsi3,Nabulsi4,Nabulsi},
where the fractional action-like variational approach (FALVA)
was proposed to model dynamical systems. FALVA functionals
are particular cases of \eqref{FALVA:J}, where the fractional time integral
introduces only one parameter $\alpha$. Let us consider the
Caldirola--Kanai Lagrangian \cite{Caldirola,Nabulsi3,Nabulsi}
\begin{equation}
\label{eq:CKLagr}
L\left(t, y, \dot{y}\right)
= m(t)\left(\frac{\dot{y}^2}{2}-\omega^2\frac{y^2}{2}\right),
\end{equation}
which describes a dynamical oscillatory system with exponentially
increasing time dependent mass, where $\omega$ is the frequency and
$m(t)=m_0 \mathrm{e}^{-\gamma b}\mathrm{e}^{\gamma t} = \bar{m}_0\mathrm{e}^{\gamma t}$,
$\bar{m}_0=m_0 \mathrm{e}^{-\gamma b}$. Using our generalized FALVA Euler--Lagrange
equation \eqref{eq:FALVA} with kernel $k_\alpha(b,t)$
to Lagrangian \eqref{eq:CKLagr}, we obtain
\begin{equation}
\label{eq:FALVAEL}
\ddot{y}(t)+\left(\delta(t)+\gamma\right)\dot{y}(t)+\omega^2y(t)=0.
\end{equation}
We study two particular kernels.
\begin{enumerate}
\item If we choose kernel
\begin{equation}
\label{eq:KuatEq}
k_\alpha (b,t)=\frac{(\rho+1)^{1-\alpha}}{\Gamma(\alpha)}\left(b^{\rho+1}
-t^{\rho+1}\right)t^\rho,
\end{equation}
defined in \cite{Katugampola}, then the Euler--Lagrange equation is
\begin{equation}
\label{eq:ELKuatugampola}
\partial_2 F\left(t,y(t),y'(t)\right)
-\frac{d}{dt}\partial_3 F\left(t,y(t),y'(t)\right)
=\left(\frac{(1-\alpha)(\rho+1)t^\rho}{b^{\rho+1}
-t^{\rho+1}}\right)\partial_3 F\left(t,y(t),y'(t)\right).
\end{equation}
In particular, when $\rho\rightarrow 0$, \eqref{eq:KuatEq}
becomes the kernel of the Riemann--Liouville fractional integral,
and equation \eqref{eq:ELKuatugampola} gives
\begin{equation*}
\partial_2 F\left(t,y(t),y'(t)\right)-\frac{d}{dt}\partial_3
F\left(t,y(t),y'(t)\right)
=\frac{1-\alpha}{b-t}\partial_3 F\left(t,y(t),y'(t)\right),
\end{equation*}
which is the Euler--Lagrange equation proved in \cite{Nabulsi3}.
For $\rho\neq 0$, we have
\begin{equation*}
\delta(t)=\frac{(1-\alpha)(\rho+1)t^\rho}{b^{\rho+1}-t^{\rho+1}}
\rightarrow 0 \text{ if } t\rightarrow\infty \text{ or } t\rightarrow 0.
\end{equation*}
Therefore, both at the very early time
and at very large time, dissipation disappears.
Moreover, if $\rho\rightarrow 0$, then
\begin{equation*}
\delta(t)=\frac{1-\alpha}{b-t}
\rightarrow
\begin{cases}
0 & \text{ if } t\rightarrow\infty \\
\frac{1-\alpha}{b} & \text{ if } t\rightarrow 0.
\end{cases}
\end{equation*}
This shows that at the origin of time, the time-dependent dissipation
becomes stationary, and that at very large time no dissipation, of any kind, exists.
\item If we choose kernel $k_{\alpha}(b,t)=\left(\cosh b- \cosh t\right)^{\alpha-1}$, then
\begin{equation}
\label{eq:extended}
\partial_2 F\left(t,y(t),y'(t)\right)
-\frac{d}{dt}\partial_3 F\left(t,y(t),y'(t)\right)
= -(\alpha-1)\frac{\sinh t}{\cosh b - \cosh t}\partial_3 F\left(t,y(t),y'(t)\right)
\end{equation}
and
\begin{equation*}
\delta(t)=-(\alpha-1)\frac{\sinh t}{\cosh b - \cosh t}
\rightarrow
\begin{cases}
\alpha-1 & \text{ if } t\rightarrow\infty\\
0 & \text{ if } t\rightarrow 0.
\end{cases}
\end{equation*}
In contrast with previous case, item 1, here
dissipation does not disappear at late-time dynamics.
\end{enumerate}

We note that there is a small inconsistence in \cite{Nabulsi3},
regarding to the coefficient of $\dot{y}(t)$ in \eqref{eq:FALVAEL},
and a small inconsistence in \cite{Nabulsi}, regarding a sign
of \eqref{eq:extended}.


\section{Conclusion}
\label{sec:conc}

In this article we unify, subsume and significantly extend
the necessary optimality conditions available in the literature
of the fractional calculus of variations.
It should be mentioned, however, that since fractional operators are nonlocal,
it can be extremely challenging to find analytical
solutions to fractional problems of the calculus of variations and,
in many cases, solutions may not exist.
In our paper we give two examples with analytic solutions, and many more can be found
borrowing different kernels from the book \cite{book:Polyanin}.
On the other hand, one can easily choose examples for which the
fractional Euler--Lagrange differential equations
are hard to solve, and in that case one needs to use numerical methods
\cite{agrawal:et:al:2012,Shakoor:01,MyID:221,MyID:225}.
The question of existence of solutions to fractional variational problems
is a complete open area of research. This needs attention.
Indeed, in the absence of existence, the necessary conditions
for extremality are vacuous: one cannot characterize an entity that does not exist in the
first place. For solving a problem of the fractional calculus of variations
one should proceed along the following three steps:
(i) first, prove that a solution to the problem exists;
(ii) second, verify the applicability of necessary optimality conditions;
(iii) finally, apply the necessary conditions which
identify the extremals (the candidates). Further elimination,
if necessary, identifies the minimizer(s) of the problem.
All three steps in the above procedure are crucial.
As mentioned by Young in \cite{young},
the calculus of variations has born from the study
of necessary optimality conditions, but any such theory is ``naive'' until
the existence of minimizers is verified. The process leading
to the existence theorems was introduced by
Leonida Tonelli in 1915 by the so-called direct method \cite{tonelli}.
During two centuries, mathematicians
were developing ``the naive approach to the calculus of variations''.
There was, of course, good reasons why the existence problem
was only solved in the beginning of XX century,
two hundred years after necessary optimality conditions began to be studied:
see \cite{cesari,torres2004} and references therein.
Similar situation happens now with the fractional
calculus of variations: the subject is only fifteen years old,
and is still in the ``naive period''. We believe
time has come to address the existence question,
and this will be considered in a forthcoming paper.


\section*{Acknowledgements}

This work was supported by {\it FEDER} funds through
{\it COMPETE} --- Operational Programme Factors of Competitiveness
(``Programa Operacional Factores de Competitividade'')
and by Portuguese funds through the
{\it Center for Research and Development
in Mathematics and Applications} (University of Aveiro)
and the Portuguese Foundation for Science and Technology
(``FCT --- Funda\c{c}\~{a}o para a Ci\^{e}ncia e a Tecnologia''),
within project PEst-C/MAT/UI4106/2011
with COMPETE number FCOMP-01-0124-FEDER-022690.
Odzijewicz was also supported by FCT through the Ph.D. fellowship
SFRH/BD/33865/2009; Malinowska by Bia{\l}ystok
University of Technology grant S/WI/02/2011;
and Torres by FCT through the Portugal--Austin (USA)
cooperation project UTAustin/MAT/0057/2008.




\begin{thebibliography}{xx}

\bibitem{OmPrakashAgrawal}
O. P. Agrawal,
Generalized variational problems and Euler-Lagrange equations,
Comput. Math. Appl. {\bf 59} (2010), no.~5, 1852--1864.

\bibitem{agrawal:et:al:2012}
O. P. Agrawal, M. M. Hasan, X. W. Tangpong,
A numerical scheme for a class of parametric problem of fractional variational calculus,
J. Comput. Nonlinear Dyn. {\bf 7} (2012), no.~2, 021005, 6~pp.

\bibitem{Almeida:AML}
R. Almeida,
Fractional variational problems with the Riesz-Caputo derivative,
Appl. Math. Lett. {\bf 25} (2012), no.~2, 142--148.

\bibitem{MyID:182}
R. Almeida, A. B. Malinowska\ and\ D. F. M. Torres,
A fractional calculus of variations for multiple integrals
with application to vibrating string,
J. Math. Phys. {\bf 51} (2010), no.~3, 033503, 12pp.
{\tt arXiv:1001.2722}

\bibitem{MyID:209}
R. Almeida, A. B. Malinowska\ and\ D. F. M. Torres,
Fractional Euler-Lagrange differential equations via Caputo derivatives,
In: Fractional Dynamics and Control,
Springer New York, 2012, Part~2, 109--118.
{\tt arXiv:1109.0658}

\bibitem{Shakoor:01}
R. Almeida, S. Pooseh\ and\ D. F. M. Torres,
Fractional variational problems depending on indefinite integrals,
Nonlinear Anal. {\bf 75} (2012), no.~3, 1009--1025.
{\tt arXiv:1102.3360}

\bibitem{DerInt}
R. Almeida\ and\ D. F. M. Torres,
Calculus of variations with fractional derivatives
and fractional integrals,
Appl. Math. Lett. {\bf 22} (2009), no.~12, 1816--1820.
{\tt arXiv:0907.1024}

\bibitem{Caldirola}
P. Angelopoulou, S. Baskoutas, A. Jannussis\ and\ R. Mignani,
Caldirola-Kanai Hamiltonian with complex friction coefficient,
Il Nuovo Cimento {\bf 109} (1994), no.~11, 1221--1226.

\bibitem{isi}
K. Balachandran, J. Y. Park\ and\ J. J. Trujillo,
Controllability of nonlinear fractional dynamical systems,
Nonlinear Anal. {\bf 75} (2012), no.~4, 1919--1926.

\bibitem{MyID:152}
N. R. O. Bastos, R. A. C. Ferreira\ and\ D. F. M. Torres,
Necessary optimality conditions for fractional
difference problems of the calculus of variations,
Discrete Contin. Dyn. Syst. {\bf 29} (2011), no.~2, 417--437.
{\tt arXiv:1007.0594}

\bibitem{MyID:179}
N. R. O. Bastos, R. A. C. Ferreira\ and\ D. F. M. Torres,
Discrete-time fractional variational problems,
Signal Process. {\bf 91} (2011), no.~3, 513--524.
{\tt arXiv:1005.0252}

\bibitem{Carpinteri}
A. Carpinteri\ and\ F. Mainardi,
{\it Fractals and fractional calculus in continuum mechanics},
CISM Courses and Lectures, 378, Springer, Vienna, 1997.

\bibitem{cesari}
L. Cesari,
{\it Optimization---theory and applications},
Applications of Mathematics (New York), 17,
Springer, New York, 1983.

\bibitem{Cresson}
J. Cresson,
Fractional embedding of differential operators and Lagrangian systems,
J. Math. Phys. {\bf 48} (2007), no.~3, 033504, 34pp.

\bibitem{Nabulsi2}
R. A. El-Nabulsi,
Fractional calculus of variations from extended Erdelyi-Kober operator,
Int. J. Mod.Phys. {\bf 23} (2009), no.~16, 3349--3361.

\bibitem{Nabulsi3}
R. A. El-Nabulsi,
Fractional quantum Euler-Cauchy equation in the Schrodinger picture,
complexified harmonic oscillators and emergence
of complexified Lagrangian and Hamiltonian dynamics,
Mod. Phys. Lett. B {\bf 23} (2009), no.~28, 3369--3386.

\bibitem{Nabulsi4}
R. A. El-Nabulsi,
A periodic functional approach to the calculus of variations
and the problem of time-dependent damped harmonic oscillators,
Appl. Math. Lett. {\bf 24} (2011), 1647--1653.

\bibitem{Nabulsi}
R. A. El-Nabulsi,
Fractional variational problems from extended exponentially fractional integral,
Appl. Math. Comput. {\bf 217} (2011), no.~22, 9492--9496.

\bibitem{jmp}
R. A. El-Nabulsi\ and\ D. F. M. Torres,
Fractional actionlike variational problems,
J. Math. Phys. {\bf 49} (2008), no.~5, 053521, 7pp.
{\tt arXiv:0804.4500}

\bibitem{gastao}
G. S. F. Frederico\ and\ D. F. M. Torres,
Fractional conservation laws in optimal control theory,
Nonlinear Dynam. {\bf 53} (2008), no.~3, 215--222.
{\tt arXiv:0711.0609}

\bibitem{fred:tor}
G. S. F. Frederico\ and\ D. F. M. Torres,
Fractional optimal control in the sense of Caputo
and the fractional Noether's theorem,
Int. Math. Forum {\bf 3} (2008), no.~10, 479--493.
{\tt arXiv:0712.1844}

\bibitem{G:H}
M. Giaquinta\ and\ S. Hildebrandt,
{\it Calculus of variations. I}, Springer, Berlin, 1996.

\bibitem{book:Helemskii}
A. Ya. Helemskii,
{\it Lectures and Excercises on Functional Analysis},
American Mathematical Society, 2006.

\bibitem{Herrera}
L. Herrera, L. N\'{u}\~{n}ez, A. Pati\~{n}o\ and\ H. Rago,
A variational principle and the classical
and quantum mechanics of the damped harmonic oscillator,
Am. J. Phys. {\bf 54} (1986), no.~3, 273--277.

\bibitem{Hilfer}
R. Hilfer,
{\it Applications of fractional calculus in physics},
World Sci. Publishing, River Edge, NJ, 2000.

\bibitem{Katugampola}
U. N. Katugampola,
New approach to a generalized fractional integral,
Appl. Math. Comput. {\bf 218} (2011), no.~3, 860--865.

\bibitem{book:Kilbas}
A. A. Kilbas, H. M. Srivastava\ and\ J. J. Trujillo,
{\it Theory and applications of fractional differential equations},
Elsevier, Amsterdam, 2006.

\bibitem{book:Kiryakova}
V. Kiryakova,
{\it Generalized fractional calculus and applications},
Pitman Research Notes in Mathematics Series,
301, Longman Sci. Tech., Harlow, 1994.

\bibitem{book:Klimek}
M. Klimek,
{\it On solutions of linear fractional differential equations of a variational type},
The Publishing Office of Czestochowa University of Technology, Czestochowa, 2009.

\bibitem{Lanczos}
C. L\'anczos,
{\it The variational principles of mechanics},
fourth edition, Mathematical Expositions, No.~4,
Univ. Toronto Press, Toronto, ON, 1970.

\bibitem{Laskin}
N. Laskin,
Fractional quantum mechanics and L\'evy path integrals,
Phys. Lett. A {\bf 268} (2000), no.~4-6, 298--305.

\bibitem{Li}
J. Li\ and\ M. Ostoja-Starzewski,
Micropolar continuum mechanics of fractal media,
Internat. J. Engrg. Sci. {\bf 49} (2011), no.~12, 1302--1310.

\bibitem{Mainardi}
F. Mainardi,
{\it Fractional calculus and waves in linear viscoelasticity},
Imp. Coll. Press, London, 2010.

\bibitem{mal}
A. B. Malinowska,
Fractional variational calculus for non-differentiable functions,
In: Fractional Dynamics and Control, D. Baleanu, J. A. Tenreiro Machado and A. C. J. Luo (eds.),
Springer New York, 2012, Part~2, Chapter~8, 97--108.

\bibitem{MalinowskaTorres}
A. B. Malinowska\ and\ D. F. M. Torres,
Nonessential functionals in multiobjective optimal control problems,
Proc. Estonian Acad. Sci. Phys. Math. {\bf 56} (2007), no.~4, 336--346.
{\tt arXiv:math/0609731}

\bibitem{comBasia:Frac1}
A. B. Malinowska\ and\ D. F. M. Torres,
Generalized natural boundary conditions for fractional
variational problems in terms of the Caputo derivative,
Comput. Math. Appl. {\bf 59} (2010), no.~9, 3110--3116.
{\tt arXiv:1002.3790}

\bibitem{MyID:130}
N. Martins\ and\ D. F. M. Torres,
Calculus of variations on time scales with nabla derivatives,
Nonlinear Anal. {\bf 71} (2009), no.~12, e763--e773.
{\tt arXiv:0807.2596}

\bibitem{MK}
R. Metzler\ and\ J. Klafter,
The random walk's guide to anomalous diffusion:
a fractional dynamics approach,
Phys. Rep. {\bf 339} (2000), no.~1, 77~pp.

\bibitem{BorisBookI}
B. S. Mordukhovich,
{\it Variational analysis and generalized differentiation. I},
Grundlehren der Mathematischen Wissenschaften,
330, Springer, Berlin, 2006.

\bibitem{BorisBookII}
B. S. Mordukhovich,
{\it Variational analysis and generalized differentiation. II},
Grundlehren der Mathematischen Wissenschaften,
331, Springer, Berlin, 2006.

\bibitem{comDorota}
D. Mozyrska\ and\ D. F. M. Torres,
Minimal modified energy control for fractional
linear control systems with the Caputo derivative,
Carpathian J. Math. {\bf 26} (2010), no.~2, 210--221.
{\tt arXiv:1004.3113}

\bibitem{MyID:181}
D. Mozyrska\ and\ D. F. M. Torres,
Modified optimal energy and initial memory
of fractional continuous-time linear systems,
Signal Process. {\bf 91} (2011), no.~3, 379--385.
{\tt arXiv:1007.3946}

\bibitem{MyID:207}
T. Odzijewicz, A. B. Malinowska\ and\ D. F. M. Torres,
Fractional variational calculus with classical and combined Caputo derivatives,
Nonlinear Anal. {\bf 75} (2012), no.~3, 1507--1515.
{\tt arXiv:1101.2932}

\bibitem{MyID:226}
T. Odzijewicz, A. B. Malinowska\ and\ D. F. M. Torres,
Generalized fractional calculus with applications
to the calculus of variations,
Comput. Math. Appl. (2012),
DOI: 10.1016/j.camwa.2012.01.073
{\tt arXiv:1201.5747}

\bibitem{MyID:203}
T. Odzijewicz\ and\ D. F. M. Torres,
Fractional calculus of variations for double integrals,
Balkan J. Geom. Appl. {\bf 16} (2011), no.~2, 102--113.
{\tt arXiv:1102.1337}

\bibitem{book:Podlubny}
I. Podlubny,
{\it Fractional differential equations},
Academic Press, San Diego, CA, 1999.

\bibitem{book:Polyanin}
A. D. Polyanin\ and\ A. V. Manzhirov,
{\it Handbook of integral equations},
CRC, Boca Raton, FL, 1998.

\bibitem{MyID:221}
S. Pooseh, R. Almeida\ and\ D. F. M. Torres,
Expansion formulas in terms of integer-order derivatives
for the Hadamard fractional integral and derivative,
Numer. Funct. Anal. Optim. {\bf 33} (2012), no.~3, 301--319.
{\tt arXiv:1112.0693}

\bibitem{MyID:225}
S. Pooseh, R. Almeida\ and\ D. F. M. Torres,
Approximation of fractional integrals by means of derivatives,
Comput. Math. Appl. (2012),
DOI: 10.1016/j.camwa.2012.01.068
{\tt arXiv:1201.5224}

\bibitem{CD:Riewe:1996}
F. Riewe,
Nonconservative Lagrangian and Hamiltonian mechanics,
Phys. Rev. E (3) {\bf 53} (1996), no.~2, 1890--1899.

\bibitem{CD:Riewe:1997}
F. Riewe,
Mechanics with fractional derivatives,
Phys. Rev. E (3) {\bf 55} (1997), no.~3, part B, 3581--3592.

\bibitem{Sha}
Z. Sha, F. Jing-Li\ and\ L. Yong-Song,
Lagrange equations of nonholonomic systems with fractional derivatives,
Chin. Phys. B {\bf 19} (2010), no.~12, 120301, 5~pp.

\bibitem{Tarasov2}
V. E. Tarasov,
Continuous limit of discrete systems with long-range interaction,
J. Phys. A {\bf 39} (2006), no.~48, 14895--14910.

\bibitem{Tarasov3}
V. E. Tarasov,
Fractional statistical mechanics,
Chaos {\bf 16} (2006), no.~3, 033108, 7~pp.

\bibitem{Tarasov1}
V. E. Tarasov\ and\ G. M. Zaslavsky,
Fractional dynamics of coupled oscillators with long-range interaction,
Chaos {\bf 16} (2006), no.~2, 023110, 13~pp.

\bibitem{tonelli}
L. Tonelli,
Sur un m\'{e}thode directe du calcul des variations,
Rend. Circ. Mat. Palermo {\bf 39} (1915), 233--264.

\bibitem{torres2004}
D. F. M. Torres,
Carath\'eodory equivalence, Noether theorems, and Tonelli full-regularity
in the calculus of variations and optimal control,
J. Math. Sci. (N. Y.) {\bf 120} (2004), no.~1, 1032--1050.
{\tt arXiv:math/0206230}

\bibitem{book:Brunt}
B. van Brunt,
{\it The Calculus of Variations},
Springer, New York, 2004.

\bibitem{young}
L. C. Young,
{\it Lectures on the calculus of variations and optimal control theory},
Foreword by Wendell H. Fleming Saunders, Philadelphia, 1969.

\bibitem{Zaslavsky}
G. M. Zaslavsky,
{\it Hamiltonian chaos and fractional dynamics},
reprint of the 2005 original,
Oxford Univ. Press, Oxford, 2008.

\bibitem{Edelman}
G. M. Zaslavsky\ and\ M. A. Edelman,
Fractional kinetics: from pseudochaotic dynamics to Maxwell's demon,
Phys. D {\bf 193} (2004), no.~1-4, 128--147.

\end{thebibliography}
\end{document}